\documentclass[11pt,a4paper]{amsart}

\topmargin-0.1in \textwidth6.1in \textheight9.6in
\evensidemargin15pt
\oddsidemargin 15pt

\setlength{\parskip}{4pt}

\usepackage{soul}
\usepackage{graphicx}
\usepackage{subcaption}
\usepackage{enumerate}
\usepackage{amsmath}
\usepackage{amsfonts}
\usepackage{psfrag}
\usepackage{color}
\usepackage{pgf,tikz}
\usepackage{mathrsfs}
\usetikzlibrary{arrows}
\usepackage{amssymb}
\usepackage{mathtools} 
\usepackage{amsthm}
\usepackage{soul}
\usepackage{hyperref}

\newtheorem{theorem}{Theorem}

\newtheorem{definition}{Definition}
\newtheorem{remark}{Remark}

\newtheorem{lemma}{Lemma}
\newtheorem{proposition}{Proposition}
\numberwithin{equation}{section}
\setlength{\delimitershortfall}{-0.1pt}
\allowdisplaybreaks[4]

\newcommand{\R}{\mathbb R}

\newcommand{\Tm}{{\mathbb T}_M}

\DeclareMathOperator \tv {TV}
\newcommand{\BV}{\hbox{\textrm{BV}}\,}
\newcommand{\N}{{\mathbb N}}

\newcommand{\Z}{\mathbb{Z}}

\renewcommand{\O}{\mathcal{O}}
\newcommand{\mm}{m}
\newcommand{\vv}{\mathtt v}
\DeclareMathOperator*{\essinf}{ess\,inf}

\newcommand{\DT}{{\Delta t}}
\newcommand{\be}{\begin{equation}}
\newcommand{\ee}{\end{equation}}
\def\RR{\mathsf{R}}
\def\MM{\mathsf{M}}

\newcommand{\modulo}[1]{{\left|#1\right|}}

\newcommand{\eps}{\varepsilon}




\definecolor{darkgreen}{rgb}{0.0, 0.5, 0.0}
\definecolor{ffqqqq}{rgb}{1.,0.,0.}
\definecolor{uuuuuu}{rgb}{0.26666666666666666,0.26666666666666666,0.26666666666666666}

\delimitershortfall=-0.1pt
\begin{document}

\title[]
{Euler-flocking system with nonlocal dissipation in 1D:\\ periodic entropy 
solutions}

\author[]
{D. Amadori, F. A. Chiarello}
\address{\newline 
Dipartimento di Ingegneria e Scienze dell'Informazione e Matematica (DISIM), University of L'Aquila -- L'Aquila, Italy}
\email{debora.amadori@univaq.it, felisiaangela.chiarello@univaq.it}

\author[]
{C. Christoforou}
\address{\newline Department of Mathematics and Statistics, University of Cyprus -- Nicosia, Cyprus}
\email{christoforou.cleopatra@ucy.ac.cy}

\date{\today}

\subjclass[2010]{Primary: 35L65; 35B40; Secondary: 35D30; 35Q70; 35L45.}

\vspace{-0.6cm}
\begin{abstract}
We consider a hydrodynamic model of flocking-type with all-to-all interaction kernel in a periodic domain in one-space dimension with linear pressure term. 
The main result is the global existence of periodic entropy weak solutions, for periodic initial data having finite total variation 
and initial density bounded away from zero. 
\end{abstract}
                  
\keywords{Euler-alignment system, periodic solutions}
                  
\maketitle    
                                
\vspace{0.2cm}

\setlength{\voffset}{-0in} \setlength{\textheight}{0.9\textheight}
\setcounter{page}{1} \setcounter{equation}{0}

\section{Introduction}
We are interested in weak solutions to the hydrodynamic model of flocking-type in one-space dimension that takes the form
\begin{equation}\label{eq:system}
\begin{cases}
    \partial_t\rho+\partial_x(\rho \vv )=0,\\
    \partial_t(\rho \vv)+\partial_x (\rho \vv^2 +p(\rho))=\int_{\mathbb{T}_L} \psi (x-y)\rho(x,t) \rho(y,t) (\vv (y,t)-\vv (x,t))dy
\end{cases}
 \end{equation}
where $(x,t)\in \mathbb{T}_\ell\times[0,+\infty)$, with $\mathbb{T}_\ell:=\R/{\ell \Z}$ and $\ell>0$. Here $\rho(x,t)$ represents the density, 
$\vv(x,t)$ the velocity, $\psi$ the alignment kernel and $p(\rho)$ the pressure term. We also assign the initial condition 
\begin{equation} \label{eq:initial_datum}
   (\rho(x,0),\vv(x,0))=(\rho_0(x),\vv_0(x)). 
\end{equation} 
and introduce $m=\rho \vv$ as the momentum with $m_0:= \rho_0 \vv_0$ its initial data. 

Self-organization is a interesting topic that has received a lot of attention in many respects including the mathematical modeling on this topic. 
Standard examples arise from phenomena in biology such as flock of birds, a swarm of bacteria or a school of fish, but one can also meet 
self-organized systems in other sciences as long as the emergence behavior is present in the system. 
Many mathematical models have been introduced to describe the emergence of flocking for such systems 
and most of them have arised from the pioneering work of Cucker and Smale~\cite{Cucker2007}. 
See~\cite{Carrillo2010, Shvydkoy2020} and the references therein. 
There is so far a broad range of studies on the subject from the discrete particle level up to the hydrodynamic formulation. 

In this paper, we study system~\eqref{eq:system} that has been derived in~\cite{Karper2014} as the hydrodynamic limit 
of the kinetic Cucker-Smale (CS) flocking model with the viscosity term and strong local alignment tending to zero. 
Actually the alignment term  was derived from the Motsch-Tadmor operator as a singular limit that improved CS model in small scales. 
In~\cite{Karper2014}, it is shown the convergence of weak solutions to the kinetic equation to strong solutions of the Euler system~\eqref{eq:system} 
with pressure 
\begin{equation*}
{\bf (P)}\qquad\qquad  p(\rho):=\alpha^2 \rho\,,\qquad    \alpha>0\;.
\end{equation*}
We remark that most studies on hydrodynamic models for flocking are on the pressureless Euler-alignment system.
We refer the reader to~\cite{Karper2013, Karper2014, Huang2006, Shvydkoy2021}  for results on the Euler-alignment system 
and point out that for the system with pressure, smooth and space-periodic solutions are obtained in~\cite{Choi2019}. 
In~\cite{AC2022, AC2024}, global entropy weak solutions to this pressured system have been established but in the setting of the Cauchy problem 
with initial data confined in a bounded domain and unconditional flocking has been captured in this weak framework.

The aim of this paper is to prove the existence of space-periodic entropy weak solutions to~\eqref{eq:system} defined for all times, 
for any initial data of bounded variation over the period and with initial density bounded away from zero.  We assume that
\begin{equation}\label{hyp-init_data}
\inf_{\mathbb{T}_\ell}\, \rho_0 >0\,,\qquad \tv\left\{\left(\rho_0, \vv_0\right); \mathbb{T}_\ell\right\}<+\infty,
\end{equation}
where $\tv$ denotes the total variation on the period.

Here we provide the definition of entropy weak solution to~\eqref{eq:system}-\eqref{eq:initial_datum}.
\begin{definition}~\label{entropy-sol}
Set $\Omega:= \mathbb{T}_\ell\times [0,+\infty)$ and assume $\left(\rho_0, \vv_0\right)\in L^\infty\left(\mathbb{T}_\ell\right)$ with $\rho_0(x)>0$ for a.e. $x$
and $\psi\in L^1_{loc}(\R)$.

Let  $(\rho, \vv):[0,+\infty)\times \mathbb{T}_\ell \to (0,\infty)\times \R$ and set $m=\rho \vv$.
We say that the $(\rho, \vv)$ is an entropy weak solution of the problem \eqref{eq:system}-\eqref{eq:initial_datum} 
if the following holds:
\begin{itemize}

\item the map $t\mapsto (\rho, \vv)(\cdot,t)\in L^1\left(\mathbb{T}_\ell\right)$ is continuous;

\item for every test function $\phi\in C^{1}_{\text{c}}(\Omega)$ 
it holds that
  \begin{align*}
      &\iint_{\Omega}\Big(\rho \,\phi_{t}(x,t)+ \rho \vv \, \phi_{x}(x,t)\Big) d x\, dt+\int_{\mathbb{T}_\ell} \rho_{0}(x)\phi(x,0) dx =0,\\
      &\iint_{\Omega}\Big(\rho\vv \,\phi_{t}(x,t)+ (\rho \vv^2+p(\rho))\, \phi_{x}(x,t)\Big) d x\, dt\\
      &\qquad -\iint_{\Omega}\Big(\int_{\mathbb{T}_\ell} \psi (x-y)\rho(x,t) \rho(y,t) (\vv (y,t)-\vv (x,t))dy\Big) \phi(x,t) d x\, dt\\
      &\qquad +\int_{\mathbb{T}_\ell} m_{0}(x)\phi(x,0) dx =0;
  \end{align*}
\item for every pair of smooth entropy--entropy flux $(\eta, q)$,  
that is,
\begin{equation*}
    \left(-\vv^2 + p'\right)\eta_{m} = q_\rho\,,\qquad 
    \eta_\rho + 2\vv \eta_m = q_{m}\,,
\end{equation*}
with $\eta$ convex, it holds
\begin{equation}\label{eq:entropy}
\partial_t \eta(\rho,m) + \partial_x q(\rho,m)\le \eta_{m} \int_{\mathbb{T}_\ell} \psi(x-y)\left(\rho(x,t)m(y,t)  - \rho(y,t)m(x,t) \right) dy
\end{equation}
in the sense of distributions on $\Omega$.
\end{itemize}
\end{definition}
From Definition~\ref{entropy-sol} one can prove that an entropy weak solution conserves the total mass 
and the total momentum 
over the period ${\mathbb{T}_\ell}$, i.e. defining
\begin{equation*}
    \mathcal{M}(t):=\int_{\mathbb{T}_\ell} \rho(x,t)\,dx,\quad \mathcal{M}_1(t):=\int_{\mathbb{T}_\ell} m(x,t)\,dx, \qquad 
    t\ge 0, 
\end{equation*}
we get 
\begin{equation*}
    \mathcal{M}(t)=\mathcal{M}(0)=\int_{\mathbb{T}_\ell}\rho_0(x)\,dx,\qquad \mathcal{M}_1(t)=\mathcal{M}_1(0)=\int_{\mathbb{T}_\ell}m_0(x)\,dx, \qquad \forall\, t\ge 0,
\end{equation*}
where we used that Definition \ref{entropy-sol} implies $\rho(\cdot,0)=\rho_0,\,m(\cdot, 0)=m_0.$
Let $M$ and $M_1$ denote the constant values of $\mathcal{M}(t)$ and $\mathcal{M}_1(t)$ respectively.

Throughout this paper the 
all-to-all interaction is assumed, i.e. the kernel $\psi$ is constant. Without loss of generality we assume:
\begin{equation*}
{\bf (\Psi)}\qquad\qquad  \psi(x)\equiv 1\,.
\end{equation*}

The main result of this paper, which is the global existence of entropy weak solutions to~\eqref{eq:system}-\eqref{eq:initial_datum}, is stated here.
\begin{theorem}\label{Th-1} 
Assume {\bf (P)}, $\bf{(\Psi)}$ and that the initial data satisfy \eqref{hyp-init_data}.
Then the Cauchy problem \eqref{eq:system}-\eqref{eq:initial_datum} admits an entropy weak solution $(\rho,\vv)$ in the sense of Definition~\ref{entropy-sol}, with $(\rho,\vv)(\cdot, t)\in \BV(\mathbb{T}_\ell)$ $\forall\, t,$ and it holds  
\begin{equation*}
	\tv\left\{\left(\rho,\vv\right)(t); \mathbb{T}_\ell\right\}\le C,\qquad \forall \,\,t>0,
\end{equation*}
\par\noindent
where  $C=C(q)$ and 
$q\doteq\frac{1}{2}\tv{\{\ln(\rho_0)\}}+\frac{1}{2\alpha}\tv{\{\vv_0\}}\,.$ 
\end{theorem}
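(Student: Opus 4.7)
The plan is to exploit assumption $\bf{(\Psi)}$ together with the conservation of $\mathcal{M}$ and $\mathcal{M}_1$ (already built into Definition~\ref{entropy-sol}) to recast the all‑to‑all nonlocal source as an affine local relaxation, and then adapt the BV-stable front-tracking / wave-front scheme developed in \cite{AC2022,AC2024} for the Cauchy problem to the periodic torus $\mathbb{T}_\ell$. Concretely, since $\psi\equiv 1$,
\[
  \int_{\mathbb{T}_\ell}\!\bigl(\rho(x)m(y)-\rho(y)m(x)\bigr)\,dy \;=\; M_1\,\rho(x)-M\,m(x),
\]
so the target momentum equation becomes the damped isothermal system $\partial_t(\rho\vv)+\partial_x(\rho\vv^2+\alpha^2\rho)=M\rho\bigl(V^\star-\vv\bigr)$ with the \emph{prescribed} constants $M,M_1$ and equilibrium velocity $V^\star:=M_1/M$. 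However, the whole construction must eventually produce the nonlocal entropy inequality~\eqref{eq:entropy}, so we will treat $M$ and $M_1$ at the discrete level as conserved by the scheme and only at the end identify the discrete local source with the nonlocal one.

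First I would build a family of approximate solutions $(\rho^\nu,\vv^\nu)$ on $\mathbb{T}_\ell\times[0,\infty)$ by a splitting procedure: on each time strip of length $\dt$, solve exact/approximate Riemann problems for the homogeneous isothermal $p$-system with $p(\rho)=\alpha^2\rho$ (using Nishida–Smoller type front tracking, which is well adapted to linear pressure), then apply an ODE step that accounts for the affine source $M\rho(V^\star-\vv)$, updating $\vv$ pointwise while leaving $\rho$ unchanged. Because the data are BV on $\mathbb{T}_\ell$ and $\inf\rho_0>0$, at each step only finitely many fronts are present and the scheme is well defined on compact time intervals.

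Next I would derive uniform BV bounds via Riemann invariants $w=\vv+\alpha\ln\rho$, $z=\vv-\alpha\ln\rho$, whose joint total variation is comparable to $q=\tfrac12\tv\{\ln\rho_0\}+\tfrac1{2\alpha}\tv\{\vv_0\}$. For the isothermal $p$-system on the line, Nishida's celebrated interaction estimate gives a Glimm-type functional $\mathcal F=\tv(w)+\tv(z)+K\,Q$ that is nonincreasing across interactions regardless of amplitude; I would transfer this to the periodic setting by defining $Q$ as a sum over all \emph{ordered} pairs of approaching fronts on $\mathbb{T}_\ell$ (each crossing of the periodic cut contributes as a real interaction). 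The source step is purely dissipative on $\tv(\vv)$ (linear damping) and leaves $\tv(\ln\rho)$ untouched, so $\mathcal F$ does not increase there either. From this one obtains $\tv\{(\rho,\vv)^\nu(t);\mathbb{T}_\ell\}\le C(q)$ uniformly in $\nu$ and $t$, and by a minimum-principle argument on $w,z$ combined with the conservation of $M$ one propagates $\inf\rho^\nu(\cdot,t)\ge c(q)\inf\rho_0>0$.

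Finally, Helly's theorem on $\mathbb{T}_\ell$ yields a subsequence converging in $L^1_{\mathrm{loc}}$ to some $(\rho,\vv)$ with the claimed BV bound; passing to the limit in the weak formulation and in the entropy inequality for the approximate scheme is standard once the lower bound on $\rho$ is available, and the discrete local source $M\rho(V^\star-\vv)$ identifies with the nonlocal integral in~\eqref{eq:entropy} because the limit satisfies $\int\rho=M$, $\int m=M_1$. The main obstacle I anticipate is the TV and interaction analysis on the circle: unlike in \cite{AC2022,AC2024} where spatial confinement of the support prevents waves from re-entering the interaction region, here fronts wrap around $\mathbb{T}_\ell$ repeatedly and can be self-approaching after long times. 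Controlling $Q$ under such wraparounds, and showing that the damping $M\rho(V^\star-\vv)$ effectively compensates the wave creation from periodic recirculation, is the core technical point and the reason one needs the specific functional $q$ and the linear structure of $p(\rho)=\alpha^2\rho$.
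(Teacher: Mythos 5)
Your overall plan is sound and parallel to the paper's: rewrite the all-to-all nonlocal source as an affine local relaxation using conservation of $\mathcal M$ and $\mathcal M_1$, build approximate solutions by front tracking with operator splitting, obtain uniform BV bounds exploiting the isothermal (Nishida) structure of $p(\rho)=\alpha^2\rho$, and close with Helly plus limit passage. Two structural differences and one genuine gap are worth flagging.

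On the differences: first, the paper passes to Lagrangian coordinates (Wagner), constructing on $\mathbb T_M$ with $M$ the total mass, whereas you propose to work directly on $\mathbb T_\ell$ in Eulerian variables. Both are workable, but the paper's choice turns the source into the particularly clean damping $v_t+\cdots=-Mv$ and makes the RH conditions for the approximate scheme easier to track when translating back (see their Lemma on Rankine--Hugoniot, and the vertical-trace estimates required for the inverse map). Second, you invoke a Glimm functional $\tv(w)+\tv(z)+KQ$ with a quadratic interaction potential $Q$ summed over approaching pairs. For the isothermal system this is more machinery than needed: the paper uses the purely \emph{linear} functional $L(t)=\sum_j|\varepsilon_j|$ together with the Nishida-type interaction estimate $|\eps|+|\eps_{\mathrm{refl}}|\le|\alpha|+|\beta|$ and the exact identity $|\eps^+|+|\eps_{\mathrm{refl}}|=|\eps^-|$ at time steps; $L$ is then nonincreasing without any quadratic correction. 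This also defuses what you identified as the ``core technical point'': when a front wraps around the torus, $\Delta L=0$ trivially by periodicity, whereas bookkeeping $Q$ under repeated wrap-arounds is exactly the headache you anticipated and would create. So you have correctly localized a difficulty, but the paper dissolves it by choosing a simpler functional; the actual novelty lies elsewhere.

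The genuine gap is the uniform positive lower bound on $\rho$ (equivalently the upper bound on $u=1/\rho$). You assert that ``a minimum-principle argument on $w,z$ combined with the conservation of $M$'' yields $\inf\rho^\nu(\cdot,t)\ge c(q)\inf\rho_0>0$. That cannot be concluded so quickly. The Riemann invariants do not give an invariant region in $u$ alone: across shocks both $w$ and $z$ move in the same direction, so neither $\sup u$ nor $\inf u$ is controlled pointwise; all the BV bound gives is the \emph{ratio} estimate $\sup u\le\inf u\cdot e^{2q}$ (Lemma~\ref{lem:sup-inf-u}). To pin down the scale of $u$ one must anchor it through the integral constraint $\int_{\mathbb T_M}u\,dy=\ell$ (equivalently $\int_{\mathbb T_\ell}\rho\,dx=M$ in Eulerian). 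Crucially, this integral is only \emph{approximately} conserved by the front-tracking scheme, because wave speeds are perturbed by $O(\eta_\nu)$; the paper's estimate \eqref{estimate:mathcalU} shows the drift is bounded by $\tilde C\,\eta_\nu\int_0^t\inf u^\nu(\tau)\,d\tau$, so the error feeds back into the quantity being bounded. Closing this loop requires the Gronwall argument in Lemmas~\ref{L4}--\ref{Lemmauinfsup}, which produce bounds of the form $C_3 e^{-C_4\eta_\nu t-2q}\le\inf u^\nu\le\sup u^\nu\le C_1 e^{C_2\eta_\nu t+2q}$ and then uniform bounds on $[0,1/\sqrt{\eta_\nu})$. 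The authors explicitly flag this as the point where the argument departs from \cite{AC2022}. Your sketch collapses this step into a phrase; you should spell out the approximate conservation of the integral, the resulting Gronwall inequality, and how it combines with the ratio bound to give uniform $u_{\mathrm{inf}},u_{\mathrm{sup}}$.
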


The analysis follows the strategy developed in~\cite{AC2022} and adapted to the periodic case. 
We transform the problem in Lagrangian coordinates in the spirit of Wagner~\cite{WAGNER1987118} 
and construct a front-tracking approximate solution similarly as in~\cite{AC2022}. 
The presence of vacuum in~\cite{AC2022} raised additional obstacles that are not here, such as loss of strict hyperbolicity 
and concentration terms of Dirac deltas on the extended momentum in the notion of weak solution. 
In this paper, the periodicity of the problem allows the wave fronts when touching the boundaries in $\mathbb{T}_\ell$ to enter the domain from the other side, 
while in~\cite{AC2022}, the boundaries in Lagrangian coordinates act as non-reflecting, and, therefore, the fronts there are absorbed from the boundaries. 
These are some of the differences between the construction here and in~\cite{AC2022}. In addition, the proof of the $L^\infty$ bounds on the approximate solutions require a different approach, compared to \cite{AC2022}.

It is worth spending few words on some papers in literature, such as \cite{Choi2019}, in which the author studies the global existence and uniqueness of strong solutions for the compressible isothermal Euler system with  nonlocal dissipation under suitable smallness and regularity assumptions on the initial data, and its large-time behavior showing the fluid density and the velocity converge to its averages exponentially fast as time goes to infinity. 
In \cite{Ewelina24} the authors study an Euler compressible system characterized by the presence on the right-hand side of a damping term, a nonlocal alignment term and a nonlocal repulsion/attraction forces term. The goal of that paper is to establish the global-in-time existence of finite-energy entropy solutions for this system without restriction on the size of initial data, using the vanishing viscosity limit for the strong solutions of the compressible Navier-Stokes equations. 
Moreover, in a very recent work \cite{Choi2024} the authors rigorously derive the isentropic Euler-alignment system with singular communication weights, considering a kinetic BGK-alignment model consisting of a kinetic BGK-type equation with a singular Cucker-Smale alignment force.
{In the present paper we focus on establishing the global existence of entropy weak solutions that are periodic with initial data away from the vacuum 
and allowing any size in $\BV$ norm. In \cite{CalvoColomboFrid2008}, the authors analyse the well-posedness of spatially periodic solutions of specific 
relativistic isentropic gas dynamics equations. Other insights about periodic solutions can be found in \cite{Dafermos2016}.}

The structure of the paper is as follows. First, in Section~\ref{S2}, we transform the problem into Lagrangian coordinates 
in the periodic domain $\mathbb{T}_M$  with $M$ being the total mass on $\mathbb{T}_\ell$ and state the global existence theorem 
for the new variables $(u,v)$ (Theorem~\ref{newthm}). 
We achieve the existence result in Section~\ref{Sect:2} in several steps. First, we construct an approximate sequence of solutions in~\ref{S2.3}, 
then, we introduce the Glimm-type linear functional in Subsection~\ref{S2.4} and show that this is non-increasing in time. 
Next, we establish uniform $L^\infty$ bounds on the approximate solution. In Subsection~\ref{subsect:Lxi}, we introduce the weighted functional $L_\xi$  
that helps us prove the estimates on the vertical traces in Subsection~\ref{subsec:vert-traces}. The proof of Theorem~\ref{newthm} is then in Subsection~\ref{S2.8}. 
Last, Section~\ref{Sect:3} is devoted to reformulate the previous results in terms of the Eulerian variables and to prove Theorem~\ref{Th-1}. 

\section{Set up in Lagrangian coordinates}\label{S2}
First, due to the conservation of mass and momentum, and hypothesis~${\bf (\Psi)}$, system  \eqref{eq:system} becomes
\begin{equation*}
\begin{cases}
\partial_t\rho+\partial_x(\rho \vv )=0,\\
\partial_t(\rho \vv)+\partial_x (\rho \vv^2 +p(\rho))= \rho M_1- M \rho \vv,
\end{cases}
\end{equation*}
Without loss of generality, we can reduce to the case $M_1=0$ thanks to the following change of the independent variables $(x,t)$ and of $\vv$:
	$x \mapsto x-\bar \vv t,\quad \vv\mapsto \vv-\bar \vv$
where $\bar \vv=M_1/M.$ This allows to reduce to the case $M_1=0,$ because in these new variables, that we call again $(x,t),$ the average of the momentum is zero, i.e.
	\begin{equation}
	\int_{\mathbb{T}_\ell} m(x,t) dx=0,\quad \forall t\geq 0.
	\end{equation}
Hence we get
\begin{equation}\label{eq:system2}
\begin{cases}
\partial_t\rho+\partial_x(\rho \vv )=0,\\
\partial_t(\rho \vv)+\partial_x (\rho \vv^2 +p(\rho))=-M \rho \vv\,,
\end{cases}
\end{equation}
with initial condition
\begin{equation} \label{eq:initial_datu-bar-v}
   (\rho(x,0),\vv(x,0))=(\rho_0(x),\vv_0(x) - \bar \vv)\,. 
\end{equation} 
In these new variables, the entropy inequality \eqref{eq:entropy} in Definition~\ref{entropy-sol} rewrites as 
\begin{equation}\label{entropy_eulerian}
\partial_t \eta(\rho,m)+\partial_x q(\rho,m)\leq -M \eta_{m} m . 
\end{equation}
Next, we reformulate the system  from Eulerian to Lagrangian coordinates. More precisely, we use the mapping
\begin{align*}
(x,t) &\mapsto (y=\chi(x,t),\,\tau=t),\\
\chi(x,t) &=\int_0^x \rho(x',t)\,dx'\in [0,M]\,,\qquad { x\in [0,\ell]},
\end{align*}
and set the variables
\begin{equation}\label{def-rho-v}
\displaystyle   u(y,t)=\dfrac{1}{\rho(x,t)},\qquad\qquad v(y,t)=\vv(x,t).
\end{equation}
Then the derivatives satisfy 
\begin{equation*}
    \partial_\tau=\partial_t+\vv\partial_x,\qquad\qquad \partial_y=\tfrac{1}{\rho}\partial_x=u\partial_x,\,
    \end{equation*}
and system~\eqref{eq:system2} reduces to 
\begin{equation}\label{eq:system_Lagrangian_M} 
\begin{cases}
\partial_\tau u-\partial_y v=0,\\
\partial_\tau v + \partial_y (\alpha^2/u)=-Mv,
\end{cases}
\end{equation}
with $(y,t)\in \Tm\times [0,\infty).$  By periodicity, we have $(u,v)(0+,t)=(u,v)(M-,t).$  
From \eqref{eq:initial_datu-bar-v}, the initial condition is given by
\begin{equation}\label{eq:init-data-uv}
(u,v)(\cdot,0) = (u_0,v_0) := (\rho_0^{-1},\vv_0 - \bar \vv)\,.
\end{equation}

From now on,  we will consider the interval $[0,M)$ representing the torus $\mathbb{T}_M$.

Recalling \eqref{hyp-init_data}, the initial data $\rho_0$ is bounded from above and away from 0; 
this ensures that the change of variable $$x\mapsto \chi(x,0)= \int_{0}^x \rho_0(x')\,dx'$$ 
is bi-Lipschitz continuous from $[0,\ell)$ to $[0,M)$\,.

Therefore, the initial condition \eqref{eq:init-data-uv}, together with \eqref{hyp-init_data},
lead to:
\begin{equation}\label{eq:init-data-lagr}
(u_0,v_0)\in \BV(\mathbb{T}_M)\,, 
\qquad \essinf_{\mathbb{T}_M} u_0 >0\,, \qquad  \int_{\mathbb{T}_M} v_0(y)\, dy=0\,, \qquad
{\int_{\mathbb{T}_M} u_0(y)\, dy=\ell}\,.
\end{equation}
We have thus reformulated our problem in Lagrangian variables and the aim is to construct an entropy weak solution  $(u(y,t),v(y,t))$ to the  problem~\eqref{eq:system_Lagrangian_M}-\eqref{eq:init-data-uv} that conserves mass and momentum. 
By entropy weak solution to \eqref{eq:system_Lagrangian_M}-\eqref{eq:init-data-uv} we mean 

\begin{definition}~\label{entropy-sol_lagr}
 Assume $\left(u_0, v_0\right)\in L^\infty\left(\mathbb{T}_M\right)$ with $u_0(y)>0$ for a.e. $y$.
	Assume that $\psi\in L^1_{loc}(\R)$.
		Let  $(u, v):\mathbb{T}_M \times [0,+\infty) \to (0,\infty)\times \R.$
	We say that $(u, v)$ 
	is an entropy weak solution of the problem \eqref{eq:system_Lagrangian_M}-\eqref{eq:init-data-uv}
	if the following holds:
	\begin{itemize}
			\item the map $t\mapsto (u, v)(\cdot,t)\in L^1\left(\mathbb{T}_M\right)$ is continuous;
			\item $(u,v)(\cdot,0)=\displaystyle{\lim_{t\to 0^+}}(u,v)(\cdot,t)=(u_0, v_0);$
			\item $(u,v)$ is a distributional solution to \eqref{eq:system_Lagrangian_M}-\eqref{eq:init-data-uv};
		\item for every pair of smooth entropy--entropy flux $(\tilde \eta, \tilde q)$,  
		with $\tilde \eta$ convex, it holds
		\begin{equation}\label{eq:entropy_lagr}
		\partial_t \tilde\eta(u,v) + \partial_x \tilde q(u,v)\le -M\tilde \eta_{v} v,
		\end{equation}
		in the sense of distributions on $\mathbb{T}_M\times (0,+\infty)$.
	\end{itemize}
\end{definition}

Thus the proof of the following existence result for the problem~\eqref{eq:system_Lagrangian_M}-\eqref{eq:init-data-uv} is the theme of this section.

\begin{theorem}\label{newthm}
Consider the problem~\eqref{eq:system_Lagrangian_M}-\eqref{eq:init-data-uv} and assume \eqref{eq:init-data-lagr}. 
Then, there exists an entropy weak solution $(u,v)$ in the sense of Definition \ref{entropy-sol_lagr}.
Moreover, there exist positive constants $u_{inf}$, $u_{sup}$, $\widetilde C_0$ such that for all $t>0$
\begin{equation}\label{eq:Linfty-bound-uv-new}
0<u_{inf}\le u(y,t) \le u_{sup}\,,\quad 
    |v(y,t)|\le \widetilde C_0
\qquad \mbox{for a.e.}\ y \,{ \in\mathbb{T}_M}\;.
\end{equation}   
\end{theorem}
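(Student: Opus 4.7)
My plan is to follow the front-tracking strategy of \cite{AC2022}, adapted to the periodic setting by exploiting the Riemann invariants of the isothermal $p$-system $u_\tau-v_y=0$, $v_\tau+(\alpha^2/u)_y=0$. For each $\nu\in\N$ I would first approximate $(u_0,v_0)$ by a piecewise constant function $(u_0^\nu,v_0^\nu)$ on $\mathbb{T}_M$ with finitely many jumps, preserving $\essinf u_0$ and satisfying $\tv\{(u_0^\nu,v_0^\nu);\mathbb{T}_M\}\le\tv\{(u_0,v_0);\mathbb{T}_M\}$. The approximate evolution is built by operator splitting on a time grid of mesh $\Delta\tau_\nu$: a front-tracking step for the homogeneous $p$-system, in which every Riemann problem is solved by a finite collection of shock and (approximate) rarefaction fronts in the standard way, is alternated with a dissipative step $v\mapsto e^{-M\Delta\tau_\nu}v$ encoding the source $-Mv$, followed by resolution of the Riemann problems the step creates at the pre-existing discontinuities. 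Periodicity is enforced by identifying $y=0$ with $y=M$, so that a front leaving one end of the representative interval re-enters from the other side rather than being absorbed; this is the central qualitative difference with \cite{AC2022}.

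The second step is to control wave interactions by a Glimm-type linear functional. For the isothermal $p$-system, the total variation of the Riemann invariants $r=\tfrac12\ln u+\tfrac1{2\alpha}v$ and $s=\tfrac12\ln u-\tfrac1{2\alpha}v$ admits a Nishida-type linear bound that is non-increasing under wave interactions. I would therefore set $L(\tau)\doteq\tv\{r(\cdot,\tau);\mathbb{T}_M\}+\tv\{s(\cdot,\tau);\mathbb{T}_M\}$ and verify on the approximate solution that $L$ does not increase either at front interactions or at the dissipative step (which can only shrink oscillations of $v$). This yields the $\nu$-independent bound $L(\tau)\le L(0)\le 2q$, matching the constant $q$ appearing in Theorem~\ref{Th-1}.

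The main obstacle is upgrading the BV control to the uniform $L^\infty$ bounds \eqref{eq:Linfty-bound-uv-new}. In \cite{AC2022} fronts eventually exit the spatial domain and a direct monotonicity argument suffices; in the periodic case fronts recirculate indefinitely, and the bound must be recovered from the dissipation combined with the conservation laws. The basic observation is that from $L(\tau)\le 2q$ one has oscillation estimates for $r$ and $s$ on $\mathbb{T}_M$, while the zero-average condition $\int_{\mathbb{T}_M}v(y,\tau)\,dy=0$ and the mass identity $\int_{\mathbb{T}_M}u(y,\tau)\,dy=\ell$ guarantee points $y_0(\tau),y_1(\tau)\in\mathbb{T}_M$ at which $v(y_0,\tau)=0$ and $u(y_1,\tau)=\ell/M$. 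Combining these pins $|v(\cdot,\tau)|$ and hence $\ln u(\cdot,\tau)$ into explicit intervals depending only on $q$, $\alpha$, $\ell$, $M$. I expect the weighted functional $L_\xi$ of Subsection~\ref{subsect:Lxi} to be the clean device that implements this at the discrete level: by localising the Glimm estimate around a running reference point $\xi\in\mathbb{T}_M$, it should transfer the BV information across vertical traces of the time slabs, giving the pointwise estimates needed in Subsection~\ref{subsec:vert-traces}.

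With uniform BV and $L^\infty$ bounds at hand, Helly's theorem together with a diagonal argument yields a subsequence $(u^\nu,v^\nu)$ converging in $L^1_{\mathrm{loc}}(\mathbb{T}_M\times[0,+\infty))$ to some $(u,v)$ that inherits the bounds. Passage to the limit in the weak formulation is routine: the source $-Mv$ converges by dominated convergence from the $L^1$ convergence of $v^\nu$, and the entropy inequality \eqref{eq:entropy_lagr} is obtained in the usual front-tracking fashion, the shock fronts producing dissipation of the correct sign for every smooth convex $\tilde\eta$ while the splitting step contributes exactly $-M\tilde\eta_v v$ in the limit. Finally, $L^1$-continuity in time and the preservation of mass and of the zero average of $v$ follow from the uniform BV bound and the structure of the scheme.
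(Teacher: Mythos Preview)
Your overall architecture matches the paper's, but the paragraph on the $L^\infty$ bounds contains a genuine gap. You invoke the identities $\int_{\mathbb{T}_M} v^\nu(y,\tau)\,dy=0$ and $\int_{\mathbb{T}_M} u^\nu(y,\tau)\,dy=\ell$ at the level of the \emph{approximate} solution, but neither holds there: the front-tracking speeds differ from the true Rankine--Hugoniot speeds by $O(\eta_\nu)$, and the telescopic cancellation that would make $\frac{d}{d\tau}\int u^\nu\,dy$ and $\frac{d}{d\tau}\int v^\nu\,dy$ vanish leaves a residual of order $\eta_\nu\cdot\tv\{(u^\nu,v^\nu)\}$. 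For $\int v^\nu$ this residual is harmless (the dissipative step contracts it), but for $\int u^\nu$ the error accumulates in time and, worse, is proportional to $\sup u^\nu$ itself. The paper closes this loop by a Gronwall argument (Lemmas~\ref{L4}--\ref{Lemmauinfsup}): one obtains $\sup u^\nu(\cdot,t)\le C_1 e^{C_2\eta_\nu t}$ and $\inf u^\nu(\cdot,t)\ge C_3 e^{-C_4\eta_\nu t}$, which are \emph{not} uniform in $t$ but become so on $[0,1/\sqrt{\eta_\nu}]$; a diagonal argument over $T_\nu=1/\sqrt{\eta_\nu}\to\infty$ then yields the global bounds in the limit. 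Your mean-value idea is morally the same as the paper's combination of the oscillation estimate (Lemma~\ref{lem:sup-inf-u}) with the integral control, but it cannot be executed without first quantifying the drift of $\int u^\nu$ and feeding it through Gronwall.

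A second, smaller point: you misread what $L_\xi$ is. The parameter $\xi\ge 1$ is not a spatial reference point but a \emph{weight} placed on shock strengths (rarefactions keep weight $1$); see \eqref{def:Lxi}. Its role has nothing to do with the $L^\infty$ bounds. It is introduced because the plain functional $L$ is merely non-increasing and gives no quantitative decay at interactions; the extra margin $(\xi-1)|\eps_{\mathrm{refl}}|$ in \eqref{eq:refined-decay-Lxi} is what lets one bound the \emph{time} total variation $\tv\{u^\nu(Y,\cdot);[0,T]\}$ at a fixed $Y$ (Subsection~\ref{subsec:vert-traces}), which in turn yields the $L^1$-in-time Lipschitz dependence on $y$ needed for compactness and for the weak formulation. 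The spatial localisation is carried instead by the triangular domain $D$ in \eqref{triangleD} and the restricted functional $L_{\xi,D}$.
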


The following analysis prepares the ground for the proof of this theorem that is given in Subsection~\ref{S2.8}.

\subsection{Characteristic curves}\label{subsec:2.1}
The characteristic speeds of system~\eqref{eq:system_Lagrangian_M} are given by
\begin{equation}\label{eq:charact_speed}
\lambda_{1,2}=\pm \frac{\alpha}{u}, \qquad u>0,
\end{equation}	 
with the indices corresponding to each family $j=1,2$. The rarefaction-shock curves, issued at a point $(u_l,v_l)$ are:
\begin{itemize}
	\item curve of the first family
	\begin{equation}\label{sys:first_family}
	\begin{cases}
	u<u_l, \quad v=v_l- \alpha \left( \sqrt{\frac{u_l}{u}-\frac{u}{u_l}} \right),\\
    u>u_l, \quad v=v_l+\alpha \ln \left(\frac{u}{u_l}\right);
	\end{cases}
	\end{equation}
	\item curve of the second family
	\begin{equation} \label{sys:second_family}
	\begin{cases}
	u<u_l, \quad v=v_l- \alpha \ln \left(\frac{u}{u_l}\right),\\
	u>u_l, \quad v=v_l- \alpha \left( \sqrt{\frac{u}{u_l}-\frac{u_l}{u}} \right).
	\end{cases}
	\end{equation}
\end{itemize}
The curves \eqref{sys:first_family}, \eqref{sys:second_family} can be parametrized as follows:
\begin{equation*}
u \to v(u;u_l,v_l)=v_l+2\alpha \, h(\varepsilon_j),\quad u>0,\quad j=1,2
\end{equation*}
with 
\begin{equation*}
\varepsilon_1=\frac{1}{2}\ln\left(\frac{u}{u_l}\right),\qquad \varepsilon_2=\frac{1}{2}\ln\left(\frac{u_l}{u}\right)\,,\qquad 
h(\varepsilon)=\begin{cases}
\varepsilon &\varepsilon\geq 0,\\
\sinh \varepsilon &\varepsilon <0.
\end{cases}
\end{equation*}
We refer to $\varepsilon_j,\,j=1,2$ as the strength of a wave of the $j-$characteristic family. 

For any pair of left and right states  $(u_l,v_l)$, $(u_r,v_r)$, with $u_{l,r}>0$, the Riemann problem to the homogeneous system 
of~\eqref{eq:system_Lagrangian_M} has a unique solution that consists of simple Lax waves and the 
corresponding strengths  $\eps_j$ be the strength of the $j=1$, $2$ wave satisfy
\begin{align}\nonumber 
\eps_2 - \eps_1 &= \frac 12 \ln \left(\frac{u_l}{u_r} \right) =  \frac 12 \ln \left(\frac{p_r} {p_l}\right)\\
\nonumber
h(\eps_1) + h(\eps_2) &= \frac{v_r - v_l}{2\alpha}
\end{align} 
where $p_{r,l} = p(u_{r,l}) = \alpha^2 / u_{r,l}$ and 
\begin{equation}\label{eq:bound-on-size-of-Rp}
|\eps_1| + |\eps_2|\le \max\left\{ \frac 12 \left| \ln \left(\frac{u_r} {u_l}\right)\right|, \frac{|v_r - v_l|}{2\alpha} \right\}\,.
\end{equation}
See for instance ~\cite[Proposition 2.1]{AC2022}.

\section{Global existence of weak solutions}\label{Sect:2}
In the following subsections, we lay the groundwork for the proof of Theorem~\ref{newthm}. 
More precisely, we construct approximate solutions to system~\eqref{eq:system_Lagrangian_M} using the front-tracking algorithm 
in conjunction with the operator splitting, introduce Lyapunov functionals that allow us to obtain uniform bounds on the total variation 
and prove uniform $L^\infty$ bounds on $(u,v)$ as well. The compactness of the sequence yields the convergence to the limit 
which is the solution to~\eqref{eq:system_Lagrangian_M}.

\subsection{Front-tracking approximate solutions}\label{S2.3} 
In this subsection we construct the approximate solutions $(u^\nu, v^\nu)$, for $\nu\in\mathbb{N}$, 
to the initial value problem~\eqref{eq:system_Lagrangian_M} with $M$-periodicity, following the approach adopted in~\cite{AC2022} for a free-boundary problem.
The construction combines the wave-front tracking algorithm with the operator splitting scheme to treat the source term. 

Below we describe the construction in three steps: 

\medskip
\paragraph{{\bf Step 1.}}\quad 
Fix $\nu\in\N$, and choose approximate periodic initial data $(u^\nu_0, v^\nu_0)$, which are piecewise constant and  satisfy
\begin{equation*}
\tv \{\left(u^\nu_0, v^\nu_0\right); \mathbb{T}_M\} \le \tv \{\left(u_0, v_0\right); \mathbb{T}_M\}\,,\qquad \| \left(u^\nu_0, v^\nu_0\right) -  \left(u_0, v_0\right)\|_\infty \le \frac 1 \nu\,,
\end{equation*}
and assign these as the initial data:
$$u^\nu(y,0) := u_0^\nu(y), \quad v^\nu(y,0) := v_0^\nu(y).$$

As a consequence of \eqref{eq:init-data-lagr}, 
one has that
\begin{equation}\label{int-v0-to-0}
\left|\int_{\mathbb{T}_M} v^\nu_0(y)\,dy\right| \le \frac M\nu \to 0 \qquad \nu\to\infty\,,
\end{equation}
and
\begin{equation}\label{int-u0-nu}
{ \essinf_{(0,M)} u^\nu_0 \ge \essinf_{(0,M)} u_0 - \frac 1\nu >0\,,\qquad \left|\int_{\mathbb{T}_M} u^\nu_0(y)\,dy - \ell\right| \le \frac M\nu \to 0 \qquad \nu\to\infty\,.}
\end{equation}

\medskip
\paragraph{{\bf Step 2.}}\quad Fix two parameters: the time step $\Delta t=\Delta t_\nu>0$ and the threshold $\eta=\eta_\nu>0$.
Having those, we set the times $t^n=n\Delta t$, $n=0,1,2,\dots$ 
and we use the parameter $\eta$ to control the size of rarefaction fronts and the errors in the speeds of shocks and rarefaction fronts. This would be made clear in the next step. Allow now both parameters $\Delta t_\nu$, $\eta_\nu$ to converge to 0 as $\nu\to\infty$.

\medskip
\paragraph{{\bf Step 3.}}\quad The approximate solution $(u^\nu, v^\nu)$ of~\eqref{eq:system_Lagrangian_M} is obtained as follows:

\begin{enumerate}
    \item[(i)] On each time interval $[t^{n-1},t^n)$, the approximate solution $(u^\nu, v^\nu)$ is the $\eta$-front tracking approximate solution to the homogeneous system of~\eqref{eq:system_Lagrangian_M} with initial data $$(u^\nu(y,t^{n-1}+), v^\nu(y,t^{n-1}+)).$$
    We recall that $\eta$-front tracking approximate solutions (see Def. 7.1 in \cite{Bressan_Book}) are piecewise constant functions with discontinuities along finitely many lines in the  $(y,t)$ half-plane and interactions between two incoming fronts. Indeed, by modifying the wave speeds slightly, by a quantity less than $\eta$, we can assume that there are only interactions of two incoming fronts. 
    To give more details of this construction, solve the Riemann problems at $t=t^{n-1}$ around each discontinuity point of the initial data $(u^\nu(y,t^{n-1}+), v^\nu(y,t^{n-1}+))$. 
    Then, retain shocks as obtained in the Riemann solution, but approximate rarefactions by fans of fronts each one of them consisting of  two constant states connected by a single discontinuity of strength less than $\eta$. More precisely, each rarefaction of strength $\varepsilon$ is approximated by $N$ rarefaction fronts, with $N=[\varepsilon/\eta]+1$, each one of strength $\varepsilon/N<\eta$ and with speed to be equal to the characteristic speed of the right state.  
   Then prolong the approximate solution within $[t^{n-1}, t^n)$ until some wave fronts interact at a point, or some wave reaches one of the boundaries of $\mathbb{T}_M,\,y=0$ and $y=M$. 

  \smallskip
 - At the interaction, there are exactly two incoming fronts as mentioned and the approximate solution is prolonged by solving the Riemann problem and approximating rarefactions, if they arise in the solution, as described  above. 
   
     \smallskip
  - At times $t=\tau$ that a front meets the boundaries, i.e. $y=0$ or $y=M$, then that wave front enters back into the domain from the other boundary $y=M$ or $y=0$, respectively,  for $t=\tau+$.
  
  \smallskip
    \item[(ii)]  At the time $t=t^n$, apply the operator splitting technique to system~\eqref{eq:system_Lagrangian_M} and define
\begin{equation}\label{eq:u-v_fractional-step}
u^\nu(y,t^n+) = u^\nu(y,t^n-)\,,\qquad  v^\nu(y,t^n+) = v^\nu(y,t^n-) \left(1-M\Delta t\right)
\end{equation}
taking into account the source term. As in (i), by possibly changing  slightly the speed of some fronts, 
there is no interaction of fronts at the times steps $t^n$. 
  \smallskip
\item[(iii)] Proceed by iteration of (i) and (ii), with $n\in\mathbb{N}$.
 \end{enumerate}

The approximate solution is defined as long as the iteration at Step 3 can be carried out and in particular, as long as the interactions do not accumulate. 
As in \cite{AC2022}, this version of front tracking algorithm for $2\times2$ systems does not require the presence of non-physical fronts:
as long as the solution exists, waves fronts do not accumulate. 

\subsection{The linear functional}\label{S2.4} We define the Lyapunov functional associated with the approximate solution $(u^\nu,v^\nu)$ to system~\eqref{eq:system_Lagrangian_M}. For convenience, we drop the index $\nu$ from here and on.

Let $t$ be a time different from any $t^n$ and at which no wave interaction occurs, and let $\{y_j\}_1^N$ be the discontinuity points of $(u,v)(\cdot,t)$ in $[0,M)$, with
\begin{equation}\label{eq:disc-points}
  0\le y_1< y_2<\ldots< y_{N(t)}<M\,,
  \end{equation}
for some integer $N=N(t)$. Let $\varepsilon_j$ be the corresponding strength of the front located at $y_j$.  Then we define the \emph{linear functional}
 \begin{equation}\label{def:L}
 L(t) = \sum_{j=1}^{N(t)} |\varepsilon_j|\,.
 \end{equation}

Similarly to~\cite[Lemma 3.1]{AC2022}, the following relation between $L(t)$ and the total variation in  $y$ of the approximate solution holds.

 \begin{lemma}\cite[Lemma 3.1]{AC2022}\label{prop:equivalence-Lin}
 The following identities hold,
 \begin{equation}\label{eq:identity-tvlnu}
 \frac 12 \tv \{\ln(u)(\cdot,t); [0, M)\} =  \frac 12 \tv \{\ln(\sigma(u(\cdot,t));[0, M)\} = L(t)\,,
 \end{equation}
 where $\sigma$ is defined $\sigma(u)=-p\left(\frac{1}{u}\right)=-\frac{\alpha^2}{u}$, and
 \begin{equation}\label{eq:tv-v}
 \tv \{v(\cdot,t);[0, M)\}\le 2\alpha \cosh(c)\, L(t),
 \end{equation}
 where $c=\max{|\varepsilon|}$ is the maximum of the sizes of the waves on $[0,M)$ at time $t$.
 \end{lemma}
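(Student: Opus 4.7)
The plan is to reduce everything to a per-front analysis, exploiting that the approximate solution $(u,v)(\cdot,t)$ is piecewise constant on $\mathbb{T}_M$ with discontinuities only at the points in~\eqref{eq:disc-points}, identified cyclically by periodicity. For any continuous function $f$ of $(u,v)$, the total variation $\tv\{f(u,v)(\cdot,t);[0,M)\}$ equals the sum of $|f(u_r,v_r)-f(u_l,v_l)|$ taken across each front. Crucially, periodicity contributes no extra jumps beyond those already counted by the $\varepsilon_j$, since the construction in Section~\ref{S2.3} produces each discontinuity exactly once in the list $\{y_j\}$.

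At the $j$-th front, the left and right states $(u_l,v_l)$, $(u_r,v_r)$ are connected by a single simple wave of family $i_j\in\{1,2\}$ with strength $\varepsilon_j$ (exactly for shocks and, by construction, also for the approximate rarefaction fronts). The parametrization recalled in Section~\ref{subsec:2.1} gives $\varepsilon_1=\tfrac12\ln(u_r/u_l)$ for a $1$-wave and $\varepsilon_2=\tfrac12\ln(u_l/u_r)$ for a $2$-wave, hence in both cases
\begin{equation*}
|\ln u_r - \ln u_l| = 2|\varepsilon_j|.
\end{equation*}
Summing over $j$ yields at once $\tfrac12\tv\{\ln u(\cdot,t);[0,M)\}=L(t)$. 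Since $\sigma(u)=-\alpha^2/u$ gives $\ln|\sigma(u)|=2\ln\alpha-\ln u$, the total variation of $\ln|\sigma(u)|$ coincides with that of $\ln u$, proving the middle equality.

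For the bound on $\tv(v)$, the same parametrization yields $v_r-v_l=2\alpha\, h(\varepsilon_j)$ across each front, so
\begin{equation*}
\tv\{v(\cdot,t);[0,M)\} = 2\alpha\sum_{j=1}^{N(t)}|h(\varepsilon_j)|.
\end{equation*}
The whole lemma then reduces to the pointwise inequality $|h(\varepsilon)|\le \cosh(c)\,|\varepsilon|$ whenever $|\varepsilon|\le c$. For $\varepsilon\ge 0$, $h(\varepsilon)=\varepsilon$ and $\cosh(c)\ge 1$, so the bound is immediate. For $\varepsilon<0$, $|h(\varepsilon)|=\sinh|\varepsilon|$; since $t\mapsto \sinh(t)/t$ is non-decreasing on $(0,\infty)$, one has $\sinh|\varepsilon|\le (\sinh c/c)\,|\varepsilon|\le \cosh(c)\,|\varepsilon|$, the last step using $\tanh c\le c$. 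Summing and recalling $L(t)=\sum_j|\varepsilon_j|$ closes the estimate.

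The only substantive step is the elementary $\cosh$-type bound on $h$; everything else is direct bookkeeping from the Lax-curve parametrization, which is precisely why the argument parallels~\cite[Lemma 3.1]{AC2022}. The only genuinely new point compared to the Cauchy case is verifying that the periodic identification does not produce spurious contributions to the total variation, and this is immediate from the enumeration in~\eqref{eq:disc-points}.
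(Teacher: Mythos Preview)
Your argument is correct and is precisely the standard one behind \cite[Lemma~3.1]{AC2022}, which the paper simply cites without reproducing a proof: read off $|\ln u_r-\ln u_l|=2|\varepsilon_j|$ and $v_r-v_l=2\alpha\,h(\varepsilon_j)$ from the Lax-curve parametrization in Section~\ref{subsec:2.1}, sum over fronts, and control $|h|$ via $\sinh t\le t\cosh t$. Your observation that the periodic enumeration \eqref{eq:disc-points} already accounts for any jump at $y=0$ is the only point specific to the present setting and is handled correctly.
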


Now, we study the variation of $L(t)$ and show bounds on the approximate solution valid for all times and independent of $\nu$. 
For this purpose, we adopt the following notation: the variation $\Delta G$  of a function of time $t\mapsto G(t)$
is the change of the function $G$ around $t$, i.e. $\Delta G(t)=G(t+)-G(t-)$. Similarly, for the variation in space $y$.

Here we recall some estimates on the variation of the wave strenghts at interactions and at time steps, see \cite[Lemma 5.4]{ABCD_JEE_2015} and {\cite[Proposition 3.1-3.2]{AC2022}}.

\begin{proposition}\label{S2:prop:estimate-time-step}

(a) Consider the interaction of two waves of the same family of sizes $\alpha$ and $\beta$,
resulting into a wave of the same family with size $\eps$ and a reflected wave of size $\eps_{refl}$. Then 
\begin{equation}\label{eq:Delta-L}
|\eps| + |\eps_{refl}| \le |\alpha|+ |\beta|\,.
\end{equation}

(b) Let $\varepsilon^-$ and $\varepsilon^+$ be the sizes of a wave before and after a time step $t^n$, respectively, 
and let $\varepsilon_{refl}$ be the size of the reflected wave. Then 
\begin{equation}\label{Delta-L-time-step}
|\eps^+| + |\eps_{refl}| = |\eps^-|
\end{equation}

If $q>0$ satisfies $|\varepsilon^-|\leq q$, then
\begin{equation}\label{time-step-reflected-wave}
c_1(q) {{M}\Delta t|\varepsilon^-| } \,\le\, |\varepsilon_{refl}| \,\le\, C_1^\pm(q) {{M}\Delta t|\varepsilon^-| }
\end{equation}
where
\begin{equation*}
c_1(q)= \frac1{1+\cosh(q)}\,,\qquad C_1^\pm(q) = 
\begin{cases} 
\frac{1} {2}  & \mbox{ if }\eps^->0\\
\frac{\cosh(q)} {2} & \mbox{ if }\eps^-<0\,.
\end{cases}
\end{equation*}
\end{proposition}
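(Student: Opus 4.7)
The plan is to treat both parts by reducing them to a Riemann problem between the extreme left and right states and then invoking the two explicit identities recorded just before \eqref{eq:bound-on-size-of-Rp}, namely
\[
\eps_2 - \eps_1 = \tfrac12 \ln\bigl(u_l/u_r\bigr), \qquad h(\eps_1) + h(\eps_2) = \tfrac{v_r - v_l}{2\alpha},
\]
where $\alpha$ denotes the sound-speed constant and $(\eps_1,\eps_2)$ are the strengths of the outgoing Lax waves. In part (a) the states $(u_l,v_l)$ and $(u_r,v_r)$ are those lying before the first and after the second incoming front; in part (b) they are the states on either side of the wave immediately after the splitting update \eqref{eq:u-v_fractional-step}.

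\emph{Part (a).} I would treat the case of two same-family $1$-waves of strengths $a$ and $b$ (the $2$-wave case is symmetric). Chaining the Lax-curve formulas \eqref{sys:first_family} through the intermediate state yields $\tfrac12\ln(u_l/u_r) = -(a+b)$ and $(v_r-v_l)/(2\alpha) = h(a)+h(b)$, so the outgoing $\eps = \eps_1$ and $\eps_{refl} = \eps_2$ satisfy
\[
\eps_2 - \eps_1 = -(a+b), \qquad h(\eps_1) + h(\eps_2) = h(a) + h(b).
\]
The inequality \eqref{eq:Delta-L} then follows from a case analysis on the signs of $a$ and $b$, using monotonicity of $h$ and convexity of $\sinh$ on the negative axis. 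The mixed-sign case (one shock and one rarefaction) is the tedious one; since the very same statement has been established for the same system in \cite[Lemma 5.4]{ABCD_JEE_2015}, I would simply invoke that reference rather than redo the case work.

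\emph{Part (b), identity \eqref{Delta-L-time-step}.} Without loss of generality, take the incoming wave to be a $1$-wave of strength $\eps^-$ joining $(u_l, v_l^-)$ and $(u_r, v_r^-)$. The splitting step leaves $u$ unchanged and rescales $v$ uniformly by $1 - M\Delta t$, so the post-step jump connects $(u_l, v_l^-(1-M\Delta t))$ with $(u_r, v_r^-(1-M\Delta t))$. Setting $\eps^+ = \eps_1^+$ and $\eps_{refl} = \eps_2^+$, the two Riemann identities become
\[
\eps_2^+ - \eps_1^+ = -\eps^-, \qquad h(\eps_1^+) + h(\eps_2^+) = h(\eps^-)\,(1 - M\Delta t).
\]
A short sign analysis based on these two equations (examining the four sign-combinations of $(\eps_1^+, \eps_2^+)$ and ruling out all but one) forces $\eps^+\eps^->0$ and $\eps_{refl}\eps^-<0$; the first identity then yields $|\eps^+|+|\eps_{refl}| = |\eps_1^+ - \eps_2^+| = |\eps^-|$.

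\emph{Part (b), bound \eqref{time-step-reflected-wave}.} To extract the quantitative estimate I view $(\eps_1^+, \eps_2^+)$ as smooth functions of the parameter $s := M\Delta t\ge 0$, with $\eps_1^+(0) = \eps^-$ and $\eps_2^+(0) = 0$. Differentiating both Riemann identities in $s$ and using $d\eps_1^+/ds = d\eps_2^+/ds$ from the first one gives
\[
\frac{d\eps_2^+}{ds} = \frac{-\,h(\eps^-)}{h'(\eps_1^+) + h'(\eps_2^+)}.
\]
Since the identity from the previous paragraph forces $|\eps_j^+(s)|\le |\eps^-|\le q$ for all $s\in[0,M\Delta t]$, the denominator is confined to $[2,\,1+\cosh(q)]$. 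For $\eps^-\ge 0$ one has $h(\eps^-)=\eps^-$ and integration from $0$ to $M\Delta t$ delivers \eqref{time-step-reflected-wave} with $c_1(q) = 1/(1+\cosh(q))$ and $C_1^+(q) = 1/2$; for $\eps^-<0$, $h(\eps^-) = \sinh(\eps^-)$ and the elementary estimates $|\eps^-|\le \sinh(|\eps^-|)\le \cosh(q)\,|\eps^-|$ on $[0,q]$ supply the extra factor of $\cosh(q)$ that appears in $C_1^-(q)$, while the lower constant $c_1(q)$ remains insensitive to the sign of $\eps^-$. The main subtlety is precisely this $\eps^-<0$ case, where one must control $\sinh(|\eps^-|)/|\eps^-|$ uniformly in terms of $q$ in order to match the asymmetric constant $C_1^-(q) = \cosh(q)/2$.
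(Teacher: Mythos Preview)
Your proposal is correct and matches the paper's treatment: the paper does not give an independent proof of this proposition but simply recalls it from \cite[Lemma~5.4]{ABCD_JEE_2015} and \cite[Propositions~3.1--3.2]{AC2022}, and your argument is a faithful reconstruction of what those references contain --- the same Riemann identities, the same sign analysis, and the same differentiation-in-$s$ estimate for \eqref{time-step-reflected-wave}. In particular, your derivation of the two relations $\eps^+-\eps_{refl}=\eps^-$ and $h(\eps_{refl})+h(\eps^+)=h(\eps^-)(1-M\Delta t)$ coincides with the identities the authors record (in a suppressed passage) as the content of \cite[Proposition~3.1]{AC2022}.
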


\smallskip
Now, we can establish the uniform bound on the linear functional $L(t)$, as long as the approximate solution is defined.

\begin{lemma}\label{lem:bounds-on-bv}
Suppose that the approximate solution $(u^\nu, v^\nu)=(u, v)$ is defined for $t\in[0,T]$. 
Then $L(t)$ is non-increasing in time and
\begin{equation}\label{LLin-decreases}
L(t)\le L(0+) \le  q,\qquad
\forall\, t\in[0,T]\,,
\end{equation}
where
\begin{equation}\label{def:q}
q\doteq\frac{1}{2}\tv{\{\ln(\rho_0)\}}+\frac{1}{2\alpha}\tv{\{\vv_0\}}\,.
\end{equation}
Moreover, there exists a constant $C_1>0$ independent of $\nu$ and $t$ such that
\begin{equation}\label{eq:unif-bound-tv-v}
\tv \{v(\cdot,t); \mathbb{T}_M\} \le C_1 \,, \qquad t\in[0,T]\,.
\end{equation}
\end{lemma}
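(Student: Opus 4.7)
The plan is to monitor how $L(t)$ evolves as $t$ increases, classifying events into three types: smooth evolution of fronts, binary interactions, and time steps $t^n$.

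First, between consecutive events each front travels with constant speed, no fronts are created or destroyed, and no sizes change, so $L$ is constant. At an interaction point of two incoming waves of sizes $\alpha$, $\beta$ of the same family, Proposition~\ref{S2:prop:estimate-time-step}(a) gives $|\eps|+|\eps_{refl}|\le|\alpha|+|\beta|$, and the analogous statement for waves of different families (as in \cite[Prop.~3.1]{AC2022}) likewise yields that the sum of outgoing strengths does not exceed the sum of the incoming ones, so $\Delta L\le 0$. At each time step $t=t^n$, every existing front of size $\eps^-$ is subjected, via the multiplication $v\mapsto v(1-M\Delta t)$ in \eqref{eq:u-v_fractional-step}, to a splitting into an outgoing front of the same family of size $\eps^+$ and a reflected front of the opposite family of size $\eps_{refl}$; Proposition~\ref{S2:prop:estimate-time-step}(b) then gives the exact identity $|\eps^+|+|\eps_{refl}|=|\eps^-|$, hence $L(t^n+)=L(t^n-)$. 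Combining these three cases yields monotonicity of $L$ on $[0,T]$, and in particular $L(t)\le L(0+)$.

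To bound $L(0+)$, I would exploit that $(u_0^\nu,v_0^\nu)$ is piecewise constant, so $L(0+)$ equals the sum, over the initial jumps, of the outgoing wave strengths of the corresponding Riemann problems. Each such sum is controlled via \eqref{eq:bound-on-size-of-Rp} and the elementary bound $\max\{a,b\}\le a+b$ by $\frac12|\Delta\ln u_0^\nu|+\frac{1}{2\alpha}|\Delta v_0^\nu|$. Summing over jumps and then invoking Step~1 of Subsection~\ref{S2.3} to compare with the original data gives $L(0+)\le \frac12\tv\{\ln u_0;\mathbb{T}_M\}+\frac{1}{2\alpha}\tv\{v_0;\mathbb{T}_M\}$. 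Finally, the bi-Lipschitz change of variable $\chi(\cdot,0)$ preserves total variations, and the identities $u_0=1/\rho_0$, $v_0=\vv_0-\bar\vv$ leave these total variations unchanged as well, so the right-hand side equals $q$ as in \eqref{def:q}. For \eqref{eq:unif-bound-tv-v}, every single wave size is dominated by $L(t)\le q$, so the maximal strength $c$ in \eqref{eq:tv-v} of Lemma~\ref{prop:equivalence-Lin} is bounded uniformly in $\nu$ and $t$ by $q$; this yields $\tv\{v(\cdot,t);[0,M)\}\le 2\alpha\cosh(q)\,q$, and incorporating the possible wrap-around jump at $y=0\equiv M$ (also controlled by $L(t)$) gives the desired $C_1=C_1(q)$.

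The main obstacle, as I see it, is less in any individual estimate and more in the bookkeeping: at each time step every front spawns a reflected wave, so the number of fronts roughly doubles at each $t^n$, and one must rely on the \emph{equality} in Proposition~\ref{S2:prop:estimate-time-step}(b) rather than an inequality to ensure that $L$ is truly non-increasing rather than drifting upward. In parallel one has to confirm, following \cite{AC2022}, that the interactions of all these newly created fronts do not accumulate in finite time, so that the approximate solution is globally defined on $[0,T]$ for every $T>0$ without introducing non-physical fronts.
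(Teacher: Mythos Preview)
Your proposal is correct and follows essentially the same approach as the paper: monotonicity of $L$ via case analysis (interactions of same/different families, time steps), the bound $L(0+)\le q$ via \eqref{eq:bound-on-size-of-Rp}, and then \eqref{eq:tv-v} with $c=q$ to obtain $C_1=2\alpha\cosh(q)\,q$. Two small remarks: the paper also lists the periodic boundary-crossing case explicitly (a front reaching $y=0$ re-enters at $y=M$ with unchanged strength, so $\Delta L=0$), which you treat only implicitly; and your mention of a separate ``wrap-around jump'' for $\tv\{v\}$ is unnecessary, since on $\mathbb{T}_M$ the total variation already accounts for all discontinuities and there is no extra jump beyond the $N(t)$ fronts.
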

\begin{proof}
Similarly to~\cite{AC2022}, the linear functional is non-increasing:
\begin{equation}\label{L-decreases}
\Delta L(t)\le 0 
\end{equation}
at all times the approximate solution is defined, i.e. in between the time steps and at the time steps $t=t^n=n\Delta t$.
Indeed, the possible cases are:

$\bullet$ 
when a 1-front reaches $y=0$ at time $t$, then, by construction, it enters $[0,M)$ from the point $y=M$ at the time $t+$ retaining the same strength, and therefore $\Delta L(t)=0$; this is a consequence of periodicity. A similar behavior holds at $y=M$;

$\bullet$ at an interaction point, that is, a point $(\bar y,t)$ with $\bar y\in [0,M)$ where two waves interact, the following holds: 
if the two waves belong to different families, then they cross each other without changing their strengths and in such cases $\Delta L(t)=0$; 
while if they are of the same family, 
then by \eqref{eq:Delta-L} we conclude that $\Delta L(t)\le 0$. Notice that this may happen also at $\bar y=0$;

$\bullet$ last, at the time steps $t^n$, the identity \eqref{Delta-L-time-step} implies $\Delta L(t^n)=0$.

\smallskip
Combining the above, we get immediately that $L(t)$ is non-increasing. 
At time $t=0+,$ using \eqref{eq:bound-on-size-of-Rp}  
we have
\begin{equation*}
L(0+)\le  \frac 12 \tv \{\ln(u_0)\} +  \frac 1{2\alpha} \tv \{v_0\} =q
\end{equation*}
that depends on $\nu$ through $v=v^\nu,\,u=u^\nu,$ 
and the proof of \eqref{LLin-decreases} follows. In view of \eqref{LLin-decreases} and taking $c=q$ in~\eqref{eq:tv-v}, we obtain
\begin{equation}\label{eq:tv-v_m}
\tv \{v(\cdot,t)\} \le 2\alpha \cosh(q)\, L(t)
\end{equation}
Thus, inequality \eqref{eq:unif-bound-tv-v} holds with $C_1 = 2\alpha \cosh(q)$. The proof is now complete.
 \end{proof}

\subsection{Uniform bounds and global existence} \label{S3.4}

In the next lemma, we show that the supremum and infimum of $u$ are comparable with a factor depending on $q$.

\begin{lemma} \label{lem:sup-inf-u}
For every $t\geq 0,$ one has 
\begin{equation}\label{sup-inf-u}
     \sup  \{u(\cdot,t) \}\leq \inf \{u(\cdot,t) \}\cdot e^{2 q
     }\,.   
\end{equation}
\end{lemma}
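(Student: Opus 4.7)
The plan is to read off the bound directly from the two preceding lemmas. By Lemma~\ref{prop:equivalence-Lin},
\begin{equation*}
\tv\{\ln u(\cdot,t); [0,M)\} \,=\, 2L(t),
\end{equation*}
while Lemma~\ref{lem:bounds-on-bv} gives $L(t)\le q$ for every $t$ in the interval of existence of the approximate solution. Combining these immediately yields $\tv\{\ln u(\cdot,t);[0,M)\}\le 2q$.

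Next, I would observe that for a piecewise constant periodic function on $\mathbb{T}_M$ the oscillation is controlled by the total variation. Choose $y^\star,\,y_\star\in[0,M)$ at which $u(\cdot,t)$ attains its supremum and its infimum respectively (such points exist since the approximate solution takes only finitely many values on $[0,M)$). A monotone-path argument on the interval joining $y_\star$ to $y^\star$ inside $[0,M)$ then gives
\begin{equation*}
\ln\bigl(\sup u(\cdot,t)\bigr) - \ln\bigl(\inf u(\cdot,t)\bigr) \,\le\, \tv\{\ln u(\cdot,t); [0,M)\} \,\le\, 2q,
\end{equation*}
and exponentiating produces exactly \eqref{sup-inf-u}. (One could in fact halve the exponent using periodicity, since on $\mathbb{T}_M$ one has $\tv\ge 2\,\mathrm{osc}$; but the looser form suffices for the statement.)

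The only mild point to check is that the bound survives across the splitting times $t=t^n$. By the fractional-step definition in \eqref{eq:u-v_fractional-step}, $u^\nu$ is unchanged at the time steps, so both $\sup u(\cdot,t)$ and $\inf u(\cdot,t)$ are unaffected; between time steps, $L(t)$ is non-increasing by Proposition~\ref{S2:prop:estimate-time-step}(a) together with the periodic prolongation rule at $y=0,M$, so $L(t)\le q$ propagates in time. I do not anticipate any real obstacle: the statement is essentially a bookkeeping corollary of Lemmas~\ref{prop:equivalence-Lin} and~\ref{lem:bounds-on-bv} combined with the elementary BV oscillation inequality.
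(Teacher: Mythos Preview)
Your argument is correct and is essentially identical to the paper's own proof: the paper likewise bounds $\sup\ln u - \inf\ln u$ by $\tv\{\ln u(\cdot,t)\}\le 2L(t)\le 2q$ via \eqref{eq:identity-tvlnu} and \eqref{LLin-decreases}, and then exponentiates. Your additional remarks on the time steps and on the sharper periodic oscillation bound are valid side comments but are not needed for the statement.
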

\begin{proof}
By \eqref{eq:identity-tvlnu} and \eqref{LLin-decreases} we deduce that
\begin{align*}
    \sup \{\ln u(\cdot,t) \}- \inf \{\ln u(\cdot,t)\} \le \tv \{\ln u(\cdot,t) \}\le 2 L(t) \le 2 L(0) \le 2q
\end{align*}
which yields \eqref{sup-inf-u}.
\end{proof}
From Lemma~\ref{lem:sup-inf-u}, we deduce that 
\begin{equation*}
    \lim_{t\to+\infty}\inf \{u (t,\cdot) \}=0 \implies  \lim_{t\to+\infty}\sup \{u (t,\cdot)\} =0
\end{equation*}
and 
\begin{equation*}
    \lim_{t\to+\infty}\sup \{u (t,\cdot)\} =+\infty \implies  \lim_{t\to+\infty}\inf \{u (t,\cdot)\} =+\infty\,,
\end{equation*}
 i.e. the sup and inf of $u$ are comparable.

Now, we turn our attention to the integral of $(u^\nu,v^\nu)$ over $[0,M)$. Let us define
 the quantities 
\begin{equation}\label{cal V and U}
{\mathcal V}^\nu(t) = \int_0^M v^\nu(y,t)\,dy\,,\qquad 
{\mathcal U}^\nu(t) = \int_0^M u^\nu(y,t)\,dy \,
\end{equation}
and derive bounds for these in the next lemma.

\begin{lemma}\label{L4}
For every $t\geq0,$ one has 
\begin{align}\label{estimate:mathcalV}
    \modulo{\mathcal{V}^\nu(t)}&\leq e^{-M t} e^{M \Delta t} \cdot \frac{M}{\nu}+ \frac{C}{M} \eta +  C \eta \Delta t\\
    \label{estimate:mathcalU}
    {\modulo{\mathcal{U}^\nu(t) -\ell}}&\leq   \frac{M}{\nu}+  \tilde C  \, \eta_\nu\int_0^t\inf \{u^\nu(\tau)\}\, d\tau
\end{align}
for suitable constants $ C,\,\tilde C >0,$ which  are independent of $t$ and $\nu$.
\end{lemma}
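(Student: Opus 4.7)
The plan is to track the evolution of $\mathcal V^\nu$ and $\mathcal U^\nu$ across the two mechanisms of the front-tracking scheme: the continuous homogeneous evolution on each $(t^{n-1}, t^n)$ (where conservation defects come only from the approximate rarefaction fronts) and the discrete fractional-step update at each $t^n$. For $\tau\in (t^{n-1},t^n)$ away from interactions and wrap-arounds, computing time derivatives via moving discontinuities yields
\[
\frac{d}{d\tau}\mathcal V^\nu(\tau) = -\sum_j \dot y_j [v]_j\,,\qquad \frac{d}{d\tau}\mathcal U^\nu(\tau) = -\sum_j \dot y_j [u]_j\,.
\]
At shocks, Rankine--Hugoniot yields $\dot y[v]=[p]$ and $\dot y[u]=-[v]$ with $p=\alpha^2/u$, while periodicity on $\mathbb{T}_M$ gives $\sum_j [v]_j = \sum_j [p]_j = 0$. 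Splitting shocks from approximate rarefactions and cancelling the shock contributions leaves
\[
\frac{d}{d\tau}\mathcal V^\nu = \sum_{\text{rare}}\bigl([p]_j - \dot y_j[v]_j\bigr)\,,\qquad \frac{d}{d\tau}\mathcal U^\nu = -\sum_{\text{rare}}\bigl(\dot y_j[u]_j + [v]_j\bigr)\,.
\]

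Next I would estimate each Rankine--Hugoniot defect at an approximate rarefaction. Using the parametrizations~\eqref{sys:first_family}--\eqref{sys:second_family} and the convention that approximate rarefaction fronts travel at the characteristic speed of their right state, a Taylor expansion in the strength $\eps_j$ (with $|\eps_j|\le\eta_\nu$) yields $|\dot y_j[u]_j + [v]_j|\le C\alpha\eps_j^2$ and $|[p]_j - \dot y_j[v]_j|\le C\alpha^2\eps_j^2/u_l$, where $u_l$ is the state to the left of the front. Summing over all rarefaction fronts, using $|\eps_j|\le\eta_\nu$ and $\sum_{\text{rare}}|\eps_j|\le L(\tau)\le q$ from Lemma~\ref{lem:bounds-on-bv}, together with the comparability $u_l\le \sup u^\nu\le e^{2q}\inf u^\nu$ from Lemma~\ref{lem:sup-inf-u}, yields
\[
\left|\frac{d}{d\tau}\mathcal V^\nu(\tau)\right|\le C\,\eta_\nu\,,\qquad \left|\frac{d}{d\tau}\mathcal U^\nu(\tau)\right|\le \tilde C\,\eta_\nu\,\inf\{u^\nu(\tau)\}\,,
\]
with $C, \tilde C$ depending only on $\alpha, q, M$. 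At each time step, averaging \eqref{eq:u-v_fractional-step} over $\mathbb{T}_M$ gives $\mathcal U^\nu(t^n+) = \mathcal U^\nu(t^n-)$ and $\mathcal V^\nu(t^n+) = (1-M\Delta t)\mathcal V^\nu(t^n-)$.

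From here, \eqref{estimate:mathcalU} follows by continuity of $\mathcal U^\nu$ across time steps: integrate the bound above from $0$ to $t$ and combine with $|\mathcal U^\nu(0+)-\ell|\le M/\nu$ from~\eqref{int-u0-nu}. For~\eqref{estimate:mathcalV}, set $a_n = |\mathcal V^\nu(t^n+)|$; the continuous-time growth and the contractive step yield the linear recurrence $a_{n+1}\le (1-M\Delta t)(a_n + C\eta_\nu\Delta t)$, whose solution satisfies $a_n \le (1-M\Delta t)^n a_0 + C\eta_\nu/M$, with $a_0 \le M/\nu$ from~\eqref{int-v0-to-0}. For $t\in[t^n, t^{n+1})$, adding at most $C\eta_\nu\Delta t$ to cover the last continuous piece and using $(1-M\Delta t)^n\le e^{-nM\Delta t}\le e^{-Mt}e^{M\Delta t}$ (from $t^n > t-\Delta t$) produces \eqref{estimate:mathcalV}. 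The main obstacle is the precise Taylor expansion of the rarefaction defects yielding the $O(\eps_j^2)$ estimates and the identification of the $\inf u^\nu$ factor via Lemma~\ref{lem:sup-inf-u}; once these are in hand, the discrete Gronwall iteration for $\mathcal V^\nu$ and the direct integration for $\mathcal U^\nu$ are routine.
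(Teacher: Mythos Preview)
Your overall strategy matches the paper's: differentiate $\mathcal V^\nu$ and $\mathcal U^\nu$ via the moving discontinuities, bound the conservation defect on each $(t^{n-1},t^n)$, and then run the discrete contraction for $\mathcal V^\nu$ and integrate for $\mathcal U^\nu$. The recursion $a_{n+1}\le(1-M\Delta t)(a_n+C\eta_\nu\Delta t)$ and the final assembly are exactly as in the paper.

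Where you diverge is in how you estimate the defects. The paper does \emph{not} split shocks from rarefactions or Taylor-expand. It adds and subtracts the exact Rankine--Hugoniot speed $\mu_j$ at every front:
\[
\frac{d}{dt}\mathcal V^\nu = -\sum_j \mu_j\,[v]_j \;-\; \sum_j\bigl((y_j)'-\mu_j\bigr)[v]_j\,.
\]
The first sum equals $-\sum_j[p]_j$ and telescopes to zero by periodicity; the second is bounded by $\eta_\nu\,\tv\{v^\nu\}\le 2\alpha\cosh(q)\,q\,\eta_\nu$ using the $\eta$-front-tracking speed error and Lemma~\ref{prop:equivalence-Lin}. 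The same trick for $\mathcal U^\nu$ yields $\eta_\nu\,\tv\{u^\nu\}$, and then the chain $\tv\{u^\nu\}\le\sup u^\nu\cdot\tv\{\ln u^\nu\}\le 2q\,e^{2q}\inf u^\nu$ is what produces the factor $\inf u^\nu$ in \eqref{estimate:mathcalU}.

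Your Taylor route gets the $u$-dependence the wrong way round. The bound $|[p]_j-\dot y_j[v]_j|\le C\alpha^2\eps_j^2/u_l$ for the $\mathcal V$-defect carries an uncontrolled factor $1/u_l\ge 1/\inf u^\nu$, and at this stage no lower bound on $u^\nu$ is available (that is Lemma~\ref{Lemmauinfsup}, which \emph{uses} the present lemma); so your constant $C$ in $|\tfrac{d}{d\tau}\mathcal V^\nu|\le C\eta_\nu$ cannot be taken independent of $\nu,t$. Conversely, your $\mathcal U$-defect bound $|\dot y_j[u]_j+[v]_j|\le C\alpha\eps_j^2$ has \emph{no} $u$-factor, so summing gives a constant $C\alpha\eta_\nu q$ rather than $\tilde C\eta_\nu\inf u^\nu$; you would end up with $|\mathcal U^\nu(t)-\ell|\le M/\nu+C\eta_\nu t$, a different (though also serviceable) estimate from the one stated. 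A minor additional point: in this construction shock speeds are also perturbed by up to $\eta$ to avoid multiple interactions, so shocks do not satisfy exact RH either; the paper's uniform $(y_j)'-\mu_j$ decomposition absorbs this automatically.
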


\begin{proof} The proof of \eqref{estimate:mathcalV} is analogous to the one in \cite[Lemma 4.4]{AC2022}. 
We observe that $\mathcal{V}^\nu (t)$ is piecewise linear on each $(t^n,t^{n+1})$ and it is possibly discontinuous at 
at each $t^n$; indeed, \eqref{eq:u-v_fractional-step} yields
\begin{equation}\label{Vnu-tn}
\mathcal{V}^\nu(t^n+)=(1-M \Delta t) \mathcal{V}^\nu(t^n-)\,.
\end{equation}
Differentiating~\eqref{cal V and U}$_1$, one obtains
\begin{align*}
\frac{d}{dt} {\mathcal V}^\nu(t) &=  -\sum_{j=0}^{N^\nu(t)+1} \Delta v^\nu (y^\nu_j) (y^\nu_j)' = (I) + (II)
\end{align*}
where 
$$
(I) =- \sum_{j=0}^{N^\nu(t)+1} \Delta v^\nu (y^\nu_j) \mu^\nu_j \;,
\quad 
(II) = -\sum_{j=0}^{N^\nu(t)+1} \Delta v^\nu (y^\nu_j) \left[ (y^\nu_j)' - \mu^\nu_j \right],
$$
with $\mu^\nu_j$ given by the exact  Rankine-Hugoniot speed.
We observe that $(I)=0$ because it is a  telescopic sum and thanks to the periodicity. 
Indeed  the Rankine-Hugoniot condition (RH)  for \eqref{eq:system_Lagrangian_M} gives us
\begin{align*}
    \Delta v^\nu (y^\nu_j) \mu^\nu_j =\Delta (\alpha^2/u^\nu)(y^\nu_j), \quad j=1,...,N^\nu(t).
\end{align*}
By means of \eqref{eq:tv-v_m}, we obtain
\begin{align*}
|(II)| &\le \eta_\nu \tv \{v^\nu, \mathbb{T}_M\}\\
& \le \eta_\nu 2\alpha \cosh(q)\, L(0+)\,.    
\end{align*}
Hence for $t'<t''$, with $t',\,t''\in (t^{n-1},t^n),$ we deduce
\begin{equation*}
    \modulo{\mathcal{V}^\nu(t'')-\mathcal{V}^\nu (t')} \leq  C  \eta_\nu (t''-t')
\end{equation*}
with $C=2\alpha q\cosh(q).$ On the other hand, at each $t^n$ we have \eqref{Vnu-tn}.
Therefore, if we set
\begin{equation*}
 a_n = |{\mathcal V}^\nu(t^{n}+)| 
\end{equation*}
use \eqref{eq:u-v_fractional-step} for $\DT<1/M$, then
\begin{align*}
a_n 
&\le \left(1-M\DT\right) \left(a_{n-1} + \eta_\nu C \DT \right)
\end{align*}
with $C$ as above. We find that for all $n\ge 1$,
\begin{align}\label{eq:estimate_an}
 a_n &\le \left(1-M\DT\right)^n a_0 +  \frac{C} {M} \, \eta_\nu 
    \le \left(1-M\DT\right)^n \frac{M}{\nu} +  \frac{C} {M} \, \eta_\nu
\end{align}
where we used \eqref{int-v0-to-0}. Hence the estimate \eqref{estimate:mathcalV} holds for $t=t^n+$. 
Let us consider $t\in(t^n,t^{n+1}),$ then we can write 
\begin{align*}
    \modulo{\mathcal{V}^\nu(t)-\mathcal{V}^\nu(t^n+)}\leq C \eta_\nu (t-t^n).
\end{align*}
Using \eqref{eq:estimate_an}, we conclude that \eqref{estimate:mathcalV} holds for every $t$.

\smallskip
Next, we show estimate~\eqref{estimate:mathcalU}. Recalling \eqref{eq:u-v_fractional-step}$_1$, the map 
$t\mapsto{\mathcal U}^\nu(t) = \int_0^M u^\nu(y,t)\,dy$ is continuous and piecewise linear, with derivative possibly discontinuous 
at the interactions times and at the time steps. Now we estimate its derivative, for $t$ not being interaction point or time step: 
$$\frac{d}{dt}\mathcal{U}^\nu(t)=-\sum_{j=0}^{N^\nu(t)+1} \Delta u^\nu (y^\nu_j)(y^\nu_j)'= (I)+(II)\;,$$
where 
$$
(I) = -\sum_{j=0}^{N^\nu(t)+1} \Delta u^\nu (y^\nu_j) \mu^\nu_j \;, \quad (II) =- \sum_{j=0}^{N^\nu(t)+1} \Delta u^\nu (y^\nu_j) \left[ (y^\nu_j)' - \mu^\nu_j \right]\,,
$$
with $\mu^\nu_j$ given by the exact Rankine-Hugoniot speed.
From the Rankine-Hugoniot condition (RH)  for \eqref{eq:system_Lagrangian_M}, we have
\begin{align*}
    \Delta u^\nu (y^\nu_j) \mu^\nu_j =\Delta (v^\nu)(y^\nu_j), \quad j=1,...,N^\nu(t)\,,
\end{align*}
hence $(I)$ can be written as a telescopic sum, and then $(I)=0$ by periodicity.

In addition, from Lemma \ref{lem:sup-inf-u}, we obtain
\begin{align*}
|(II)| &\le \eta_\nu \tv(u^\nu(t))\\
&\leq  \eta_\nu \tv(\ln(u^\nu(t))) \sup(u^\nu(t))\\
&\leq \tilde C \eta_\nu \inf(u^\nu((t))\,,
\end{align*}
with $\tilde C\doteq  2q  e^{2 q}$ independent of $\nu$. Combining the above, integrating over $[0,t]$, and recalling \eqref{int-u0-nu}$_2$, we arrive at
\begin{align*}
   \left| \mathcal{U}^\nu(t)-\ell\right|\leq \frac{M}{\nu} ~+~ \tilde C  \, \eta_\nu\,\int_0^t\inf(u^\nu(\tau))\,d\tau,
\end{align*}
which yields \eqref{estimate:mathcalU}. This completes the proof.
\end{proof}

\begin{lemma}\label{Lemmauinfsup}
Having the initial data~\eqref{eq:init-data-lagr}, the approximate sequence $(u^\nu, v^\nu)$ to~\eqref{eq:system_Lagrangian_M} 
is defined for all times $t>0$, and the following holds.
\begin{itemize}
    \item[(i)] There exist positive constants $C_1,\,C_2,\,C_3$ and $C_4$, independent on $\nu$ and $t$, such that 
\begin{align}\label{eq:ast}
    C_3 e^{-C_4 \eta_\nu t   -2q } &\le  \inf \{u^\nu (\cdot,t)\}\le  \sup \{u^\nu (\cdot,t) \}\leq C_1 e^{C_2 \eta_\nu t { +2q }} 
    \end{align}
and there exist constant values $0<u_{inf}\le u_{sup}$ such that 
\begin{align}\label{eq:unif-bounds}
      0< u_{inf} 
      &\le  u^\nu (y,t)\le u_{sup}\qquad \forall \,
      (y,t)\in T_M\times [0,1/\sqrt{\eta_\nu})  \,.
    \end{align}

\item[(ii)] There exists a constant $\widetilde C_0>0$ such that
\begin{equation}\label{eq:unif-bounds-v_nu}
    |v^\nu(y,t)|\le \widetilde C_0\qquad \forall\, y\,,\ t\,,\ \nu\,. 
\end{equation}
\end{itemize}
\end{lemma}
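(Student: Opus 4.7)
The argument splits into three parts: (a) global existence of $(u^\nu,v^\nu)$ for all $t>0$, (b) the $u$--bounds \eqref{eq:ast}--\eqref{eq:unif-bounds}, and (c) the $v$--bound \eqref{eq:unif-bounds-v_nu}. For (a) I would follow the no-accumulation argument of \cite{AC2022}: the functional $L(t)$ is non-increasing by Lemma \ref{lem:bounds-on-bv}, the initial configuration has finitely many fronts, each interaction of two physical fronts produces at most two outgoing fronts, fronts reaching $y=0$ or $y=M$ re-enter by periodicity without producing new ones, and the rescaling \eqref{eq:u-v_fractional-step} at $t^n$ creates no fronts either. A standard argument then rules out accumulation of interactions on any finite time interval.

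For (b) my key idea is to reduce the control of $\inf u^\nu,\sup u^\nu$ to a scalar ODI for $\mathcal{U}^\nu$. Lemma \ref{lem:sup-inf-u} provides the two-sided sandwich
\begin{equation*}
e^{-2q}\,\frac{\mathcal{U}^\nu(t)}{M}\;\le\;\inf u^\nu(\cdot,t)\;\le\;\frac{\mathcal{U}^\nu(t)}{M}\;\le\;\sup u^\nu(\cdot,t)\;\le\;e^{2q}\,\frac{\mathcal{U}^\nu(t)}{M},
\end{equation*}
so it is enough to sandwich $\mathcal{U}^\nu$ itself. Revisiting the derivative computation from the proof of Lemma \ref{L4}, $\mathcal{U}^\nu$ is continuous across interactions and across time steps (by \eqref{eq:u-v_fractional-step}$_1$), and at every smooth point one has $(I)=0$ by periodicity while $|(II)|\le\tilde C\eta_\nu\inf u^\nu(\cdot,t)\le(\tilde C/M)\eta_\nu\,\mathcal{U}^\nu(t)$. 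Hence $\bigl|\tfrac{d}{dt}\ln\mathcal{U}^\nu(t)\bigr|\le(\tilde C/M)\eta_\nu$ a.e., and integration together with $\mathcal{U}^\nu(0)\in[\ell-M/\nu,\ell+M/\nu]$ from \eqref{int-u0-nu} yields
\begin{equation*}
(\ell-M/\nu)\,e^{-\tilde C\eta_\nu t/M}\;\le\;\mathcal{U}^\nu(t)\;\le\;(\ell+M/\nu)\,e^{\tilde C\eta_\nu t/M}.
\end{equation*}
Plugging this back into the sandwich produces \eqref{eq:ast}. On $[0,1/\sqrt{\eta_\nu})$ one has $\eta_\nu t<\sqrt{\eta_\nu}$, which, since $\eta_\nu\to 0$, is uniformly bounded along the whole sequence; hence the exponential factors in \eqref{eq:ast} are absorbed into universal constants and \eqref{eq:unif-bounds} follows with $\nu$--independent $u_{inf},u_{sup}$.

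For (c) I would combine the uniform total variation bound \eqref{eq:unif-bound-tv-v} with \eqref{estimate:mathcalV}, whose right-hand side is bounded independently of $t\ge 0$ and $\nu$ (each of its three terms is, since $\eta_\nu,\Delta t_\nu$ are bounded above). The integral mean-value inequality $\inf v^\nu(\cdot,t)\le \mathcal{V}^\nu(t)/M\le\sup v^\nu(\cdot,t)$ then gives
\begin{equation*}
|v^\nu(y,t)|\;\le\;\frac{|\mathcal{V}^\nu(t)|}{M}+\tv\{v^\nu(\cdot,t);\mathbb{T}_M\}\;\le\;\widetilde C_0,
\end{equation*}
which is the desired uniform $L^\infty$ bound.

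The main obstacle, I expect, lies in step (b). The subtle point is that the pointwise estimate $|\tfrac{d}{dt}\mathcal{U}^\nu|\lesssim\eta_\nu\inf u^\nu$ cannot be closed as a Gronwall inequality in $\mathcal{U}^\nu$ alone without Lemma \ref{lem:sup-inf-u}; the integrated estimate \eqref{estimate:mathcalU} by itself would yield at best a one-sided bound that eventually breaks down. It is precisely the fact that $\sup u^\nu$ and $\inf u^\nu$ are comparable with a multiplicative constant depending only on the \emph{conserved} functional $L$ that turns the $L^\infty$ question into a one-dimensional ODI, and makes the short window $[0,1/\sqrt{\eta_\nu})$ enough to extract $\nu$--uniform bounds.
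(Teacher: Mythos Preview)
Your arguments for (b) and (c) are correct and in fact (b) is slightly cleaner than the paper's: instead of running separate Gronwall arguments on $z(t)=\inf u^\nu$ and $w(t)=\sup u^\nu$, you pass directly to the ODI $\bigl|\tfrac{d}{dt}\ln\mathcal{U}^\nu\bigr|\le(\tilde C/M)\eta_\nu$ and then invoke the sandwich from Lemma~\ref{lem:sup-inf-u}. This is equivalent but more transparent. Part (c) is essentially the paper's argument.

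The genuine gap is in (a). Your claim that ``the rescaling \eqref{eq:u-v_fractional-step} at $t^n$ creates no fronts'' is false. After the update $v\mapsto(1-M\Delta t)v$ the Riemann data at each discontinuity change: a single incoming front of size $\eps^-$ gives rise, upon re-solving the Riemann problem, to a transmitted wave $\eps^+$ \emph{and} a reflected wave $\eps_{refl}$ of the other family (Proposition~\ref{S2:prop:estimate-time-step}(b)). So the number of fronts can grow at every time step, and the no-accumulation argument you sketch does not close. The paper handles this by two additional ingredients you omit: (1) the bound \eqref{time-step-reflected-wave} shows $|\eps_{refl}|\le C_1^-(q)M\Delta t\,|\eps^-|$, so that choosing $\Delta t_\nu$ small relative to $\eta_\nu$ (namely $C_1^-(q)Mq\,\Delta t\le\eta$) guarantees every newly generated rarefaction has size $\le\eta$ and is therefore not split by the algorithm; (2) the $u$--bounds from (i) ensure the characteristic speeds $\pm\alpha/u$ lie in two disjoint intervals bounded away from zero, after which the analysis of \cite{AG_2001} rules out accumulation of interactions. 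Note also the logical order: (i) and (ii) are proved first \emph{as long as the approximate solution is defined}, and only then is global existence established using those bounds; your ordering (a) before (b) would leave the speed bounds unavailable when you need them.
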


\begin{proof} 
Observing that
\begin{equation*}
  M \, \inf \{ u^\nu(\cdot,t) \} \, \le\, \mathcal{U}^\nu(t) \,\le\, M\, \sup \{u^\nu(\cdot,t)\}
\end{equation*}
and calling $z(t)\doteq \inf \{u^\nu(\cdot,t)\}$, we find that
\begin{equation*}
0\le z(t) \le C_1 + C_2  \, \eta_\nu\,\int_0^t z(\tau)\,d\tau\,,\qquad C_1>{\frac\ell M} + \frac{1}{\nu}\,,\quad C_2=  \frac {{\widetilde{C}} }{M}, \,
\end{equation*}
using Lemma~\ref{L4} with $C_1,\,C_2$ independent of $t$ and $\nu$.
By Gronwall's inequality, we find that
$$
z(t)\le C_1 e^{C_2 \eta_\nu t }\,.
$$
For $T>0$  and $t\in[0,T]$, the relation~\eqref{sup-inf-u} implies that $\inf \{u^\nu(\cdot,t)\}$, $\sup \{u^\nu(\cdot,t)\}$ 
are uniformly bounded from above on $[0,T]$ and for every $\nu$, since $\eta_\nu \to 0$ as $\nu\to\infty$.

Similarly, from the inequality
\begin{align*}
   \left| \mathcal{U}^\nu(t)-\ell\right|\leq \frac{M}{\nu} ~+~ {\widetilde{C}}  \, \eta_\nu\,\int_0^t\sup \{u^\nu(\cdot,\tau)\}\,d\tau
\end{align*}
and setting $w(t)\doteq \sup \{u^\nu(\cdot,t)\}$, we deduce that
\begin{equation*}
  \ell - \frac{M}{\nu} ~-~ {\widetilde{C}}   \, \eta_\nu\,\int_0^t w(\tau)\,d\tau \le M w(t).
\end{equation*}
Then, 
\begin{equation*}
  C_3 ~-~ C_4  \, \eta_\nu\,\int_0^t w(\tau)\,d\tau \le w(t),\quad C_3<\frac{\ell}{M} - \frac{1}{\nu}, \quad C_4=\frac{\widetilde{C}}{M},
\end{equation*}
with $C_3$ and $C_4$ independent of $t$ and $\nu.$
By Gronwall inequality we find that
\begin{equation*}
    w(t)\geq C_3 e^{-C_4 \eta_\nu t}.
\end{equation*} 
For $t\in[0,T]$ with $T>0$ fixed, we find that $\inf u^\nu(t)$, $\sup u^\nu(t)$ are uniformly bounded from below as well on $[0,T]$ and for every $\nu$.
Applying the result in \eqref{sup-inf-u}, we get \eqref{eq:ast}.

Taking now $t_\nu=\frac{1}{\sqrt{\eta_\nu}} $ in \eqref{eq:ast}, we arrive at \eqref{eq:unif-bounds} immediately for suitable constants $0<u_{inf}\le u_{sup}$ which are independent on $\nu$, { but depend on $q$}.

\smallskip
To prove the bound on $v^\nu$, we proceed in a similar fashion. Using \eqref{estimate:mathcalV}, we can write  
\begin{align*}
M \inf \{v^\nu(\cdot,t)\}\leq \mathcal{V}^\nu(t) \leq M \sup \{v^\nu(\cdot,t)\}.
\end{align*}
Employing estimate~\eqref{estimate:mathcalV}, we have
\begin{align*}
\inf \{v^\nu(\cdot,t)\}\leq e^{-Mt} e^{M \Delta t_\nu}\frac{1}{\nu}+\frac{C}{M^2} \eta_\nu+\frac{C}{M}\eta_\nu \Delta t_\nu.
\end{align*}
We also have
\begin{align*}
    \sup  \{v^\nu(\cdot,t) \}- \inf \{ v^\nu(\cdot,t) \}\le \tv  v^\nu(\cdot,t) \le C_1
\end{align*}
from  \eqref{eq:unif-bound-tv-v}, and this implies 
\begin{align*}
    \sup \{ v^\nu(\cdot,t) \} &\le C_1+ \inf  \{v^\nu(\cdot,t)\}\leq C_1+ \frac{1}{M}\mathcal{V^\nu}(t)\\
   & \le C_1+ e^{-Mt} e^{M \Delta t_\nu}\frac{1}{\nu}+\frac{C}{M^2} \eta_\nu+\frac{C}{M}\eta_\nu \Delta t_\nu.
\end{align*}
On the other hand, we have
\begin{align*}
C_1+\inf \{v^\nu(\cdot,t)\}& \geq \sup \{v^\nu(\cdot,t)\}\geq \frac{1}{M}\mathcal{V^\nu}(t)\\
&\geq -e^{-Mt} e^{M \Delta t_\nu}\frac{1}{\nu}-\frac{C}{M^2} \eta_\nu-\frac{C}{M}\eta_\nu \Delta t_\nu,
\end{align*}
so that
\begin{align*}
\inf \{v^\nu(\cdot,t)\}\geq -e^{-Mt} e^{M \Delta t_\nu}\frac{1}{\nu}-\frac{C}{M^2} \eta_\nu-\frac{C}{M}\eta_\nu \Delta t_\nu -C_1.
\end{align*}
These inequalities imply \eqref{eq:unif-bounds-v_nu} for a suitable constant $\tilde C_0,$ independent of $\nu,\,t.$

It remains now to show that the approximate sequence is defined for all times $t>0$. To achieve this, we need to verify that the size of rarefaction fronts remains less than $\eta$ for all $t>0$ and the interaction points do not accumulate. Indeed, by construction, the size of each rarefaction is bounded by $\eta_\nu$ at time $t=0+$, and after an interaction with other waves, the size of a rarefaction does not increase. However, after a time step, newly generated rarefactions that may result as a reflected wave of a shock have strength estimated 
by~\eqref{time-step-reflected-wave}:
$$
|\eps_1^+|\le C_1^-(q) {M}\DT |\eps_2^-| \,.
$$
Therefore, if we choose $\DT=\DT_\nu$, $\eta=\eta_\nu$ such that
\begin{equation}
     C_1^-(q) {M}\DT \, q  \le \eta\,,
\end{equation}
then every newly generated rarefaction is guaranteed to have size $\le \eta$, since $|\eps_2^-|\le q$. 
As a consequence, the rarefaction would not be divided into two or more, according to the algorithm in Subsection~\ref{S2.3}. 

Furthermore, we can see from~\eqref{eq:charact_speed} and \eqref{eq:unif-bounds} 
that the propagation speeds of the waves are uniformly bounded and the ranges 
of values of each family are separated. Actually, because of periodicity, waves reaching one boundary enter from the other.
Thus, we can apply the analysis in~\cite[Sect. 3]{AG_2001}, and prove that, except for a finite number of interactions, 
there is at most one outgoing wave of each family for each interaction. In view of this analysis, the approximate sequence $(u^\nu, v^\nu)$ 
is defined for all times $t>0$ and hence the bounds in  \eqref{eq:unif-bounds} are valid.
The proof is now complete.
\end{proof}

\subsection{A weighted functional} \label{subsect:Lxi}
We bring now into play a weighted functional introduced in ~\cite{AG_2001} that helps us measure the wave cancellation between time steps in a better way than the linear functional $L(t)$. This is the weighted functional
\begin{equation}\label{def:Lxi}
L_\xi(t) = \sum_{j=1,\ \eps_j>0}^{N(t)} |\eps_j| + \xi \sum_{j=1,\ \eps_j<0}^{N(t)} |\eps_j|\qquad \xi\ge1\,,
\end{equation}
that is all shocks are multiplied by a weight $\xi\ge 1$. Notice that  for $\xi=1$, the functional coincides with $L(t)$, i.e. $L_1=L$. Also, it is clear that  $L(t)\le L_\xi(t) \le \xi L(t)$ for all $t$, hence $L_\xi(t)$ is defined for all times and we have,
from Lemmas~\ref{lem:bounds-on-bv} and~\ref{Lemmauinfsup}, that
\begin{equation}\label{bound-on-Lxi}
    L_\xi(t) \le \xi L(0+)\qquad \forall \, t>0\,.
\end{equation}

Now, following the analysis in~\cite{ABCD_JEE_2015}, we bound the decrease of $L_\xi(t)$ for all times different from time steps. More precisely,  if 
\begin{equation}\label{bounds-on-xi}
1\le \xi\le \frac 1 {c(q)}\,, \qquad  c(q) := \frac{\cosh(q)-1}{\cosh(q)+1}
\end{equation}
and $L(0+)\le q$, then $\Delta L_\xi(t)\le 0$ for any $t>0$, $t\not = t^n$ for $n\ge 1$. 
In particular, if $t\ne t^n$ is an interaction time between two waves of the same family, and with the notation of (a) in Proposition~\ref{S2:prop:estimate-time-step},
it holds
\begin{equation}\label{eq:refined-decay-Lxi}
 \Delta L_\xi(t) + \left(\xi-1\right)|\eps_{refl}| \le 0\,.
\end{equation}
In particular, for $\xi=1$ \eqref{eq:refined-decay-Lxi} reduces to $\Delta L(t)\le 0$, see~\eqref{eq:Delta-L}. 
The proof of~\eqref{eq:refined-decay-Lxi} can also be found in~\cite[App. A.3]{AC2022}.

On the other hand, the functional $L_\xi$ may increase across the time steps, for $\xi>1$: from \cite[(3.30)]{AC2022}, 
\begin{equation}\label{eq:Delta-L-xi-time-step}
\Delta L_\xi(t^n) \le \frac{M}2  \DT \, (\xi-1) L(t^n-)\,.
\end{equation}

\subsection{Estimates on vertical traces}\label{subsec:vert-traces}
Here, we study the total variation of the approximate solutions in time for fixed  $Y\in[0,M)$. Given $Y\in [0,M)$ and $t>0$, we define the functional
\begin{equation*}
    W^\nu_Y(t) = \frac 12 \tv\{ \ln (u^\nu)(Y,\cdot); (0,t)\}\,.
\end{equation*}

By the definition of the strengths, $  W^\nu_Y(t)$ is the sum of the strengths $|\eps|$ of the waves that cross $Y$ 
within the time interval $(0,t)$ and the goal is to find a uniform bound for $W^\nu_Y$ on bounded intervals of time, 
independently of $Y\in [0,M)$. 

Given $(Y,T)$, we set the triangular domain 
\begin{equation}\label{triangleD}
D=\left\{ (y,t):\,\, y\in I(t), t\in [0,T] \right\}
\end{equation}
with $I(t)=[Y-\lambda^\ast (T-t),\, Y+\lambda^\ast (T-t) ]$ where $\lambda^\ast=\frac{\alpha}{u_{inf}}$. 
The slope of the sides of the triangle is selected so that no wave present outside this triangular region can enter at a later time. 
In other words, $D$ depends on $(Y,T)$ and is the triangle with top vertex $(Y,T)$ and base $(Y-\lambda^\ast T,\, Y+\lambda^\ast T)\times \{t=0\}.$

Next, we define the weighted functional
\begin{equation}\label{def:LxiD}
L_{\xi,D}(t) = \sum_{j\in I(t),\ \eps_j>0} |\eps_j| + \xi \sum_{j\in I (t),\ \eps_j<0} |\eps_j|,\qquad \xi\ge1\,, \quad t\in [0,T]
\end{equation} 
in which we consider only all the waves {present} in the triangle $D$ at time $t$.
In the case of $\xi=1$, the functional coincides with $L_D(t),$ i.e.
\begin{equation}\label{def:LD}
L_{D}(t) = \sum_{j\in I(t),\ \eps_j>0} |\eps_j| + \sum_{j\in I(t),\ \eps_j<0} |\eps_j|, \quad \quad t\in [0,T].
\end{equation} 
\begin{lemma} There exist $\widetilde C_1( q)$,   $\widetilde C_2(q)$ such that, for every  $Y\in[0,M),\,\nu\in \N$ and $T>0$, 
the following holds:
\begin{align}\label{eq:bound-on-TV_at_y}
    W^\nu_Y(T)\le  \widetilde C_1 (q) \, \zeta(0) +  M \widetilde C_2(q)  \int_0^{T} L(t) \,\zeta(t)\,dt \,. 
\end{align} 
with $\zeta(t):= \Big\lfloor\frac{2\lambda^*(T-t)}{M}\Big \rfloor +1,$ where $\lfloor \cdot \rfloor$ denotes the integer part.
\end{lemma}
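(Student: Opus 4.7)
The plan is to bound $W^\nu_Y(T)$ by the total strength of wave fronts that can reach the vertical line $\{y=Y\}$ inside a suitable triangular domain of dependence. I lift the approximate solution $(u^\nu,v^\nu)$ to the universal cover $\R$ by $M$-periodic extension and work inside the triangle $D$ with top vertex $(Y,T)$, base $I(0)$ of length $2\lambda^\ast T$, and slanted sides of slope $\pm\lambda^\ast=\pm\alpha/u_{inf}$. Thanks to \eqref{eq:charact_speed} and the uniform bound $u^\nu\ge u_{inf}$ in Lemma~\ref{Lemmauinfsup}, $\lambda^\ast$ dominates every characteristic speed, so no front outside $D$ can enter $D$ at later times; moreover, between consecutive events each front travels at constant speed, so it crosses $\{y=Y\}$ at most once. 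Since every torus-crossing of $Y$ corresponds, in the lift, to a crossing of $\{y=Y\}$ by some periodic copy of a front in $D$, one obtains
\begin{equation*}
W^\nu_Y(T) \,\le\, \sum_{\substack{f\text{ front piece in }D\\ f\text{ crosses }\{y=Y\}}} |\eps_f|\,.
\end{equation*}

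The right-hand side is split according to where each counted front was created: (i) initial fronts from $(u_0^\nu,v_0^\nu)$ on $I(0)$, (ii) reflected fronts generated at same-family interactions inside $D$, and (iii) reflected fronts generated at the time steps $t^n\in[0,T]$ inside $D$. Different-family interactions introduce no new total wave strength, and the continuing outgoing front at any same-family interaction or time step just inherits the strength of an incoming front, so only the initial and the reflected fronts need to be counted. Counting the copies of the fundamental domain $[0,M)$ that fit inside $I(t)$ of length $2\lambda^\ast(T-t)$ gives $L_D(t)\le \zeta(t)L(t)$ for every $t\in[0,T]$, which bounds (i) by $\zeta(0)L(0+)\le\zeta(0)q$. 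For (iii), \eqref{time-step-reflected-wave} yields that at each time step $t^n$ the total reflected strength from fronts of $D$ is at most $\tfrac{\cosh(q)}{2}M\DT\,L_D(t^n-)$; summing and using $L_D\le\zeta L$ produces a Riemann sum controlled by $\tfrac{\cosh(q)}{2}M\int_0^T \zeta(t)L(t)\,dt$, which is the structure of the second term of the claim.

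For (ii), I fix $\xi\in(1,1/c(q)]$ and apply the refined decay \eqref{eq:refined-decay-Lxi} at each same-family interaction inside $D$, together with the time-step increase \eqref{eq:Delta-L-xi-time-step}. Summing from $0$ to $T$ and using $L_{\xi,D}(T-)\ge 0$ and $L_{\xi,D}(0+)\le \xi\zeta(0)q$ I get
\begin{equation*}
\sum_{\text{interactions in }D}|\eps_{refl}| \,\le\, \frac{\xi\,\zeta(0)\,q}{\xi-1} + \frac{M}{2}\int_0^T \zeta(t)\,L(t)\,dt\,,
\end{equation*}
and collecting (i)--(iii) yields the claim with constants $\widetilde C_1(q)$, $\widetilde C_2(q)$ depending on $q$ only through $\xi=1/c(q)$ and $\cosh(q)$. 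The main obstacle is the event-by-event bookkeeping showing that, beyond (i), only reflected fronts from (ii) and (iii) introduce new strength, with every continuing outgoing front absorbed into its predecessor; formalizing this in the lifted periodic setting, together with the bound $L_D(t)\le\zeta(t)L(t)$ obtained by carefully counting the periodic copies of the fundamental domain inside $I(t)$, is the delicate part, analogous to the counting in~\cite{AC2022} in the free-boundary case and to the vertical-trace estimates of~\cite{AG_2001}, here adapted through the factor $\zeta(t)$ that records the periodic wrap-around.
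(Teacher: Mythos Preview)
Your proposal is correct and uses the same ingredients as the paper: the triangle $D$ with apex $(Y,T)$, the weighted functional $L_{\xi,D}$ with $\xi=1/c(q)$, and the periodic count $L_D(t)\le\zeta(t)L(t)$, arriving at the same constants $\widetilde C_1=(2+\tfrac{1}{\xi-1})q$ and $\widetilde C_2=C_1^-(q)+\tfrac12$. The difference is purely organizational: the paper combines your pieces into a single Lyapunov functional $t\mapsto W^\nu_Y(t)+A_Y(t)+\kappa L_{\xi,D}(t)$ with $\kappa=\tfrac{1}{\xi-1}$, where $A_Y(t)$ is the total strength of fronts in $D$ that are \emph{approaching} $Y$, and checks case by case (crossings of $Y$, interactions at or away from $Y$, exits from $D$, time steps) that this functional is nonincreasing except at time steps, where its growth is bounded by $M\widetilde C_2\,\Delta t\,L_D(t^n)$. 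Your key inequality $W^\nu_Y(T)\le L_D(0)+\sum_{\text{int}}|\eps_{refl}|+\sum_{t^n}\sum|\eps_{refl}|$ is exactly what the $A_Y$ part of this functional yields, and your separate control of (ii) by telescoping $L_{\xi,D}$ is what the $\kappa L_{\xi,D}$ term accomplishes inside the paper's combined functional.

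One caution on your heuristic justification: the phrase ``the continuing outgoing front \ldots\ just inherits the strength of an incoming front'' is not literally true---at a shock--shock merger the transmitted shock can be strictly larger than either parent---so a direct genealogy/credit argument along these lines does not close by itself. The clean formalization of what you flag as ``the delicate part'' is precisely the paper's approaching functional $A_Y$: at each event one verifies $\Delta(W^\nu_Y+A_Y)\le|\eps_{refl}|$ (same-family interactions) or $\le C_1^-(q)M\Delta t\,L_D$ (time steps), which immediately gives your (i)+(ii)+(iii) bound. The paper also makes the periodic bookkeeping explicit through multiplicity counters $\gamma_A\le\gamma_L\le\zeta(t)$ for the copies of a front inside $I(t)$; in your lifted picture these appear implicitly when you invoke $L_D(t)\le\zeta(t)L(t)$.
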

\begin{proof}
Fix  $Y\in [0,M)$, $T>0$ and take  $A_Y(t)$ to be the sum of the sizes of the waves that, at time $t$, are approaching the position $Y$  within the triangle $D$: 
{ $$A_Y(t)=\displaystyle{\sum_{\text{approaching $Y$ and $\varepsilon\in I (t)$}}\modulo{\varepsilon}}\,.$$}\\
It consists of the waves of the $2^{nd}$ family, located to the left of  $Y$  and the waves of the $1^{st}$ family, located to the right of $Y$.

Now, we claim that the map
\begin{equation*}
    t\mapsto W^\nu_Y(t) + A_Y(t) +  \kappa   L_{\xi,D}(t)\,, \quad \kappa=\frac1{\xi-1}
\end{equation*}
is nonincreasing at times $t\not = t^n$, i.e.
\begin{equation}\label{Delta-W-one}
\Delta\left(W^\nu_Y + A_Y + \kappa L_{\xi,D}\right)(t) \le 0;    
\end{equation}
while at time steps, it holds
\begin{equation}\label{Delta-W-two}
\Delta\left(W^\nu_Y + A_Y + \kappa  L_{\xi,D}\right)(t^n) \le \DT\,  L_D(t^n) \, M\,\widetilde C_2\,,\qquad 
\widetilde C_2 = \,C_1^-(q) + \frac{ 1}{2} ,\,
\end{equation}

\begin{figure}
	\centering
    \begin{tikzpicture}[scale=1]
         \draw[line width=1] (-2.5,0) -- (9,0); 
         \draw[line width=1] (8.8,0.2) -- (9,0);   
                  \draw[line width=1] (8.8,-0.2) -- (9,0);        
           \draw[line width=1] (2.4,-0.3) -- (2.4,6); 
           \draw[line width=1] (2.2,5.8) -- (2.4,6); 
           \draw[line width=1] (2.6,5.8) -- (2.4,6);   
                  \draw[line width=1,-, dotted] (2.8,-0.3) -- (2.8,5); 
       \draw[line width=1,dashed] (4.,-0.3) -- (4.,5);
             \draw[line width=1,dashed] (5.6,-0.3) -- (5.6,5);
                \draw[line width=1,dashed] (7.2,-0.3) -- (7.2,5);
        \draw[line width=1,dashed] (0.8,-0.3) -- (0.8,5); 
        \draw[line width=1,dashed] (-0.8,-0.3) -- (-0.8,5); 
           \draw[line width=1,dashed] (-2.4,-0.3) -- (-2.4,5);

         \draw (2.4,-0.6) node {$0$};   
          \draw (2.8,-0.6) node {$Y$}; %
        \draw (4.,-0.6) node {$M$}; 
        \draw (0.8,-0.6) node {$-M$}; 
         \draw (-1,-0.6) node {$-2M$};   
       
        \draw (5.6,-0.6) node {$2M$}; %
        \draw (2.8,5.3) node {$\,\,\quad (Y,T)$}; %
        \draw (3.1, 4) node {$D$}; %
        \draw (9,-0.4) node {$y$}; %
               \draw (2.,5.8) node {$t$}; %
     \draw[line width=1,-,blue] (2.8,5) -- (-2.6,0.);
      \draw[line width=1,-,blue] (2.8,5) -- (8.2,0.);
        \draw[line width=2,-] (2.8,-0.1) -- (2.8,0);
        \draw[line width=2,-] (4.4,-0.1) -- (4.4,0);
        \draw[line width=2,-] (6.0,-0.1) -- (6.0,0);     
                \draw[line width=2,-] (7.6,-0.1) -- (7.6,0);     
             \draw[line width=2,-] (1.2,-0.1) -- (1.2,0);
        \draw[line width=2,-] (-0.4,-0.1) -- (-0.4,0);
        \draw[line width=2,-] (-2.0,-0.1) -- (-2.0,0);    
        
       {
       \draw[line width=1,-,  blue] (0.5, 1.3) -- (0.27, 0.2); 
              \draw[line width=1,-,  blue] (2.1, 1.3) -- (1.87, 0.2);      
      \draw[line width=1,-,  red] (3.7, 1.3) -- (3.47, 0.2); 
            \draw[line width=1,-,  red] (5.3, 1.3) -- (5.07, 0.2); 
                                   }
      \draw[line width=1,-, blue] (0.5, 1.3) -- (0., .1);
     \draw[line width=1,-, blue] (2.1, 1.3) -- (1.6, .1);
        \draw[line width=1,-, red] (3.7, 1.3) -- (3.2, .1);
        \draw[line width=1,-, red] (5.3, 1.3) -- (4.8, .1);
                                    
      \draw[line width=1,-,  blue] (0.5, 1.3) -- (0.8, 1.9);
            \draw[line width=1,-,blue] (2.1, 1.3) -- (2.4, 1.9);
     \draw[line width=1,-, red] (3.7, 1.3) -- (4., 1.9);
            \draw[line width=1,-, red] (5.3, 1.3) -- (5.6, 1.9);
      \draw[line width=1,-, red] (0.5, 1.3) -- (0.1, 2.1);
      
      \draw[line width=1,-,red] (2.1, 1.3) -- (1.7, 2.1);

       \draw[line width=1,-, blue] (3.7, 1.3) -- (3.3, 2.1);

       \draw[line width=1,-, blue] (5.3, 1.3) -- (4.9, 2.1);

    \end{tikzpicture}
    \caption{The interaction of waves in case \textbf{c} within the triangle $D$ defined in \eqref{triangleD}. The difference in colours indicates approaching (blue) and not approaching (red) wave fronts.} \label{fig:case_c}
  \end{figure}
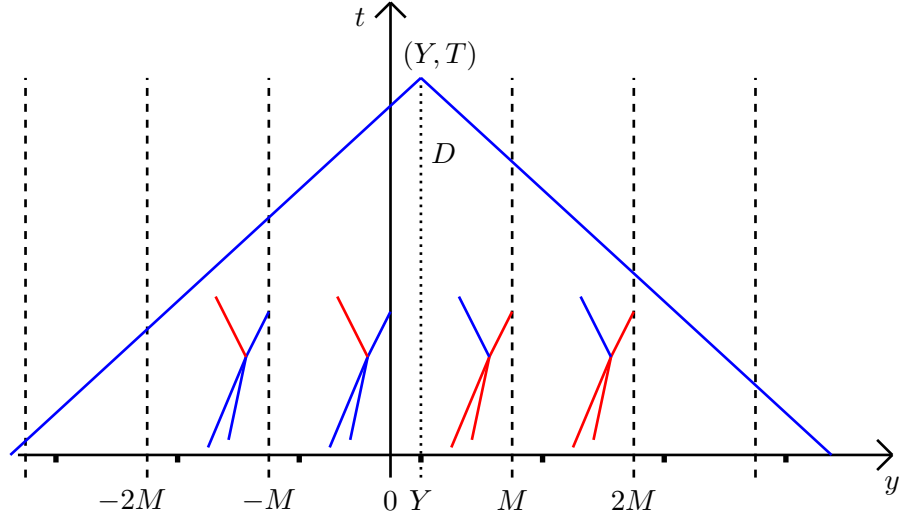


\begin{figure}
	\centering
    \begin{tikzpicture}[scale=1]
         \draw[line width=1] (-2.5,0) -- (9,0); 
         \draw[line width=1] (8.8,0.2) -- (9,0);   
                  \draw[line width=1] (8.8,-0.2) -- (9,0);        
           \draw[line width=1] (2.4,-0.3) -- (2.4,6); 
           \draw[line width=1] (2.2,5.8) -- (2.4,6); 
           \draw[line width=1] (2.6,5.8) -- (2.4,6);   
                  \draw[line width=1,-, dotted] (2.8,-0.3) -- (2.8,5); 
       \draw[line width=1,dashed] (4.,-0.3) -- (4.,5);
             \draw[line width=1,dashed] (5.6,-0.3) -- (5.6,5);
                \draw[line width=1,dashed] (7.2,-0.3) -- (7.2,5);
        \draw[line width=1,dashed] (0.8,-0.3) -- (0.8,5); 
        \draw[line width=1,dashed] (-0.8,-0.3) -- (-0.8,5); 
           \draw[line width=1,dashed] (-2.4,-0.3) -- (-2.4,5);

         \draw (2.4,-0.6) node {$0$};  
         \draw (2.8,-0.6) node {$Y$}; %
        \draw (4.,-0.6) node {$M$}; 
        \draw (0.8,-0.6) node {$-M$}; 
         \draw (-1,-0.6) node {$-2M$};   
       
        \draw (5.6,-0.6) node {$2M$}; %
        \draw (2.8,5.3) node {$\,\,\quad (Y,T)$}; %
        \draw (3.1, 4) node {$D$}; %
        \draw (9,-0.4) node {$y$}; %
               \draw (2.,5.8) node {$t$}; %
                              
     \draw[line width=1,-,blue] (2.8,5) -- (-2.6,0.);
      \draw[line width=1,-,blue] (2.8,5) -- (8.2,0.);
        \draw[line width=2,-] (2.8,-0.1) -- (2.8,0);
        \draw[line width=2,-] (4.4,-0.1) -- (4.4,0);
        \draw[line width=2,-] (6.0,-0.1) -- (6.0,0);     
                \draw[line width=2,-] (7.6,-0.1) -- (7.6,0);     
             \draw[line width=2,-] (1.2,-0.1) -- (1.2,0);
        \draw[line width=2,-] (-0.4,-0.1) -- (-0.4,0);
        \draw[line width=2,-] (-2.0,-0.1) -- (-2.0,0);    
        
       {
       \draw[line width=1,-,  blue] (-0.4, 1.3) -- (-0.63, 0.2); 
          \draw[line width=1,-,  blue] (1.2, 1.3) -- (0.97, 0.2); 
          \draw[line width=1,-,  blue] (2.8, 1.3) -- (2.57, 0.2); 
            \draw[line width=1,-,  red] (4.4, 1.3) -- (4.17, 0.2); 
            \draw[line width=1,-,  red] (6, 1.3) -- (5.77, 0.2);

                                   }
      \draw[line width=1,-, blue] (-0.4, 1.3) -- (-0.7, .5);
         \draw[line width=1,-, blue] (1.2, 1.3) -- (0.9, .5);
        \draw[line width=1,-, blue] (2.8, 1.3) -- (2.5, .5);
        \draw[line width=1,-, red] (4.4, 1.3) -- (4.1, .5);
   \draw[line width=1,-, red] (6, 1.3) -- (5.7, .5);

      \draw[line width=1,-,  blue] (-0.4, 1.3) -- (0, 2);
      \draw[line width=1,-,  blue] (1.2, 1.3) -- (1.6, 2);
      \draw[line width=1,-,  red] (2.8, 1.3) -- (3.2, 2);
      \draw[line width=1,-,  red] (4.4, 1.3) -- (4.8, 2);
      \draw[line width=1,-,  red] (6, 1.3) -- (6.4, 2);
   
      \draw[line width=1,-, red] (-0.4, 1.3) -- (-0.7, 2.1);
      \draw[line width=1,-, red] (1.2, 1.3) -- (0.9, 2.1);
      \draw[line width=1,-, red] (2.8, 1.3) -- (2.5, 2.1);
      \draw[line width=1,-, blue] (4.4, 1.3) -- (4.1, 2.1);
      \draw[line width=1,-, blue] (6, 1.3) -- (5.7, 2.1);

    \end{tikzpicture}
    \caption{The interaction of waves in case \textbf{d} within the triangle $D$ defined in \eqref{triangleD}. The difference in colours indicates approaching (blue) and not approaching (red) wave fronts.} \label{fig:case_d}
  \end{figure}
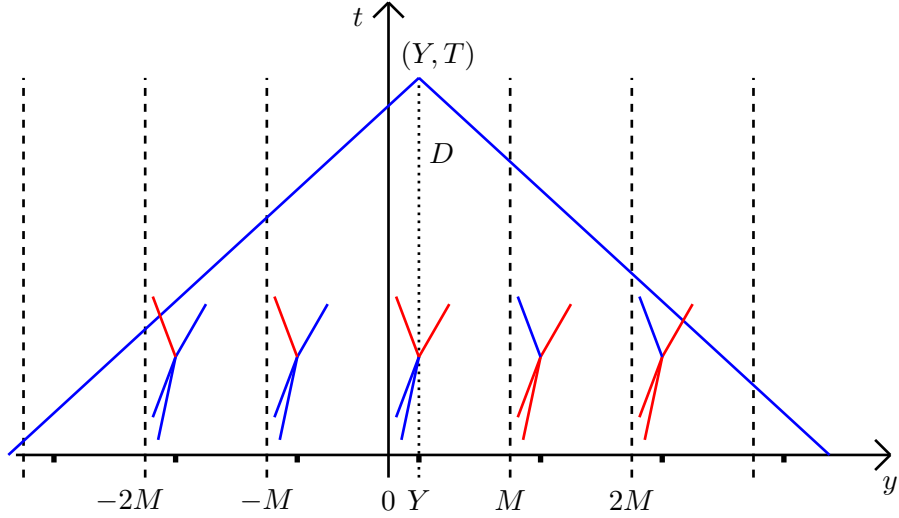
\noindent
with $C^-_{1}(q) = \cosh(q)/2$, from \eqref{time-step-reflected-wave}. 

To prove the claim, we introduce the following integers:\\
$\bullet$\quad $\gamma_A$ is the number of times that a wave $\eps$ is approaching $Y$ due to periodicity,  \\
$\bullet$\quad $\gamma_L$ is the number of times that a wave $\eps$ is present at time $t$ in $I(t)$ 
due to periodicity.  Then, for a wave in $D$ that is approaching $Y,$ we immediately get the following relation 
\begin{equation}\label{gamma A and L} 1\le \gamma_A\le \gamma_L\le \Big\lfloor\frac{2\lambda^* (T-t)}{M}\Big \rfloor +1.\,\end{equation}

Notice that $t\mapsto W^\nu_Y(t) + A_Y(t) + \kappa  L_{\xi,D}(t)$ is piecewise constant 
and can change value only in the following cases:

\smallskip{}
{\bf a.}\quad If a wave front of size $\eps$ crosses the position $Y$, then \eqref{Delta-W-one} holds because
 $$
\Delta W^\nu_Y(t)=|\eps|\,,\quad  \Delta A_Y(t) = - |\eps|\,,\quad  \Delta L_{\xi,D}(t)=0;
$$

\smallskip{} 
{\bf b.}\quad If a wave front of size $\eps$ reaches the left or the right side of the triangle $D$, then
$$
\Delta W^\nu_Y(t)=0= \Delta A_Y(t)\,,\quad  \Delta L_{\xi,D}(t)\leq -\modulo{\varepsilon};
$$
and~\eqref{Delta-W-one} holds.

\smallskip{}
{\bf c.}\quad If $t$ is an interaction time at which two wave fronts occur at a point $y\not = Y$, then $\Delta W^\nu_Y(t)=0$. 
 Now, regarding the interacting waves, we have    
 $$
\Delta A_Y(t) =\gamma_A\cdot  \left(|\eps|-|\alpha|-|\beta|\right) + (\gamma_L-\gamma_A)  |\eps_{refl}| = (\gamma_L-2\gamma_A)  |\eps_{refl}| $$
while
$$\Delta L_{\xi, D}(t)\le - \gamma_L\cdot  (\xi-1)|\eps_{refl}| \;,
$$
adopting  the notation of (a) in Proposition~\ref{S2:prop:estimate-time-step} and using \eqref{eq:refined-decay-Lxi}.

More precisely, the term $\gamma_A\cdot  \left(|\eps|-|\alpha|-|\beta|\right)$ takes into account the case of interaction points 
with incoming fronts approaching $Y$ and hence the transmitted one is also approaching, while the reflected front is not approaching. 
On the other hand, the term  $(\gamma_L-\gamma_A)  |\eps_{refl}|$ involves the interaction cases with incoming fronts not approaching of $Y$, 
hence, having only the reflected front as approaching. See Fig. \ref{fig:case_c}.
Hence,
\begin{equation*}
    \Delta (W^\nu_Y+A_Y+\kappa L_{\xi, D})\le \modulo{\eps_{refl}}\left(\gamma_L-2\gamma_A- \kappa \gamma_L(\xi-1)\right) \leq 0, 
\end{equation*}
since $\kappa=\frac{1}{\xi-1}>0$ and using~\eqref{gamma A and L}. Thus, \eqref{Delta-W-one} holds true.

\smallskip{}
 {\bf d.}\quad  Finally, if the interaction occurs at the point  $Y$, then
 $$
\Delta W^\nu_Y(t)= |\eps|\,,\quad\quad  \Delta L_{\xi,D}(t)\le  -\gamma_L (\xi-1)|\eps_{refl}|\le 0\,,
$$
and
\begin{align*}
\Delta A_Y(t)=&-\gamma_A\cdot\left( |\alpha| + |\beta|\right) +|\eps|(\gamma_A-1)+(\gamma_L-\gamma_A)\,|\eps_{refl}|\\
&=\gamma_A\cdot\left(|\eps|- |\alpha| - |\beta|\right) -|\eps|+(\gamma_L-\gamma_A)\,|\eps_{refl}|\\
&= (\gamma_L-2\gamma_A)\,|\eps_{refl}| -|\eps|
\end{align*}
where we used \eqref{eq:Delta-L}. 
Then,
$$
\Delta W^\nu_Y(t) + \Delta A_Y(t)+\kappa \Delta L_{\xi,D}(t) \le    -2\gamma_A\,|\eps_{refl}|  \le 0 
$$
and hence \eqref{Delta-W-one} holds in all cases. See Figure \ref{fig:case_d}.

In the case that $t=t^n$,  we show that
{ \begin{equation}\label{eq:DeltaW-case_d}
    \Delta W^\nu_Y(t^n) + \Delta A_Y(t^n)\le \DT\, C_1^-(q)M L_D(t^n)  \,.
\end{equation}
}
Indeed, if no wave reaches the position  $Y$ at time $t^n$, then 
 $$
\Delta W^\nu_Y(t^n)=0  \,,\quad  \Delta A_Y(t^n)\le \DT\, C_1^-(q)M L_D(t^n)  
$$
with $C_1^-(q) = \cosh(q)/2$. 
On the other hand, if a wave of size $\eps^-$ reaches  $Y$ at time $t^n$, then
$$
\Delta W^\nu_Y(t^n)=|\eps^+|  \,,\quad  \Delta A_Y(t^n)\le -|\eps^-| \cdot\gamma_A + |\eps^+|(\gamma_A -1)+ \DT\, C_1^-(q)M \, L_D(t^n)  ,
$$
where $\eps^+$ is the size of the transmitted wave. From \eqref{Delta-L-time-step}, 
one has that $|\eps^+|< |\eps^-|$ and hence \eqref{eq:DeltaW-case_d} holds also in this case. Thus \eqref{eq:DeltaW-case_d} is true.

We combine now \eqref{eq:DeltaW-case_d} and \eqref{eq:Delta-L-xi-time-step} to get
\begin{equation*}
\Delta\left(W^\nu_Y + A_Y + \kappa L_{\xi,D}\right)(t^n) \le \DT\,L_D(t^n) \, M \left(C_1^-(q)  + \frac{1}{2}  \right)
\end{equation*}
and hence \eqref{Delta-W-two} holds with  $\widetilde C_2 (q)\, \dot = \,C_1^-(q)+ \frac{1}{2 }$\,.

Having established the claim,  we apply \eqref{Delta-W-one} and \eqref{Delta-W-two} to find
\begin{align}
    W^\nu_Y(T)&\le  W^\nu_Y(T) + A_Y(T) + \kappa L_{\xi, D}(T) \nonumber\\
    &= \underbrace{W^\nu_Y(0)}_{=0} + \underbrace{A_Y(0)}_{\le L_D(0)} 
    + \kappa \underbrace{L_{\xi, D}(0)}_{\le \xi  L_D(0)} + \sum_{0<t<T}\Delta\left(W^\nu_Y + A_Y + \kappa L_{\xi, D}\right)(t)  \nonumber\\
    &\le \left(1+ \frac{\xi }{\xi-1}\right)L_D(0)  + \sum_{j=1}^n \Delta\left(W^\nu_Y + A_Y + \kappa L_{\xi, D}\right)(t^j)  \nonumber\\
    &\le  \left(2+ \frac{1}{\xi-1}\right)L_D(0) +  M \widetilde C_2(q)\int_0^{t^n} L_D(t) \,dt \label{eq:4}
\end{align}
for the integer $n$ satisfying $t^n < T \le t^{n+1}$. Note that we used that $L_D(t)$ is non-increasing on $(t^{n-1},t^{n})$ and hence,
$$
\DT\,L_D(t^n) \le \int_{t^{n-1}}^{t^n} L_D(t) \,dt\,.
$$
Now, choose $\xi=c(q)^{-1} = (\cosh(q) +1)/(\cosh(q) -1)$  in \eqref{bounds-on-xi}, and set
$$
\widetilde C_1 := \left( 2 + \frac{1}{\xi-1}\right) q= \left(\frac{3+\cosh(q)}{2}\right) q\, \ge q\,.$$
By the definition of $I(t)$, $L_D(t)$ can be estimated as follows:
$$L_D(t)\le L(t)\left( \Big\lfloor\frac{2\lambda^*(T-t)}{M}\Big \rfloor +1\right), \qquad 0\le t\le T\,.$$
These last estimates, together with \eqref{eq:4}, imply that \eqref{eq:bound-on-TV_at_y} holds for  $0\le Y<M$\,.
\end{proof}

Now, following the proof of~\cite[Lemma 3.7]{AC2022}, the estimate \eqref{eq:bound-on-TV_at_y} yields that, for every $T>0$, 
there exist positive constants $C$ and  $L$ independent of $\nu\in\N$ such that for all $y\in \mathbb{T}_M$, 
\begin{equation}\label{bound-bv-time}
    \tv\{\left(u^\nu,v^\nu\right)(y,\cdot);[0,T]\} \le C 
\end{equation}
and $L^1$-stability on bounded intervals of time holds:
\begin{align}\label{bound-integral-time}
   \int_0^T \left| v^\nu(y_1,t) -  v^\nu(y_2,t)\right|\, dt \le L \left| y_2 - y_1  \right|\qquad \forall\, y_1\,,\, y_2\in \mathbb{T}_M\,.
\end{align}
Stability estimate~\eqref{bound-integral-time} holds also for $u^\nu$.

\subsection{Proof of Theorem 2}\label{S2.8}
In this subsection we show a compactness property of the sequence of approximate solutions $\left(u^{\nu}, v^{\nu}\right)$, 
constructed in Subsection~\ref{S2.3}, as $\nu\to\infty$ and $\DT=\DT_\nu\to 0$. In particular, up to a subsequence, 
we find in the limit an entropy weak solution of the problem \eqref{eq:system_Lagrangian_M}-\eqref{eq:init-data-uv}, 
in the sense of Definition \ref{entropy-sol_lagr}.

\begin{theorem}\label{Th-1-lagr} Let $(u_0,v_0)$ satisfy \eqref{eq:init-data-lagr}.
Then there exists a subsequence of $(u^{\nu}, v^{\nu}),$ still denoted by $(u^{\nu}, v^{\nu})$ 
which converges, as $\nu\to\infty$, to a function $(u,v)$ in $L^1_{loc}\left(\mathbb{T}_M\times[0,+\infty) \right)$, and the following holds:
\begin{itemize}
\item [a)] the map $t\mapsto (u,v)(\cdot,t)\in L^1(\mathbb{T}_M)$ is well-defined and it is Lipschitz continuous 
in the $L^1$--norm. It satisfies $(u,v)(\cdot,0)=(u_0,v_0)$ and for all $t>0$
\begin{equation}\label{eq:Linfty-bound-uv}
0<u_{inf}\le u(y,t) \le u_{sup}\,,\quad      |v(y,t)|\le \widetilde C_0   \qquad \mbox{for a.e.}\ y
\end{equation}   
with $u_{inf}$, $u_{sup}$ given in \eqref{eq:unif-bounds} and $\widetilde C_0$ as in  \eqref{eq:unif-bounds-v_nu}\,. 
In particular, as $\nu\to\infty$ 
\begin{equation}\label{eq:conv-int_u_dy}
     \int_0^y u^\nu(y',t)\, dy'  ~\to~ \int_0^y u (y',t)\, dy'\qquad \forall\, y\in \mathbb{T}_M\,,\ t\ge 0\,.
 \end{equation}

 \item [b)] For every $T>0$, the map $y\mapsto (u,v)(y,\cdot) \in L^1(0,T)$ is well defined on $\mathbb{T}_M$ and Lipschitz continuous, with Lipschitz constant $\widetilde L$ possibly depending on $T$. 

 \item[c)] 
$(u,v)$ is an entropy weak solution to~\eqref{eq:system_Lagrangian_M} with inital data~\eqref{eq:init-data-uv}.
\end{itemize}
\end{theorem}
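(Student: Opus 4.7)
The plan is to combine the uniform estimates of Lemmas~\ref{lem:bounds-on-bv} and~\ref{Lemmauinfsup} with the vertical-trace bounds \eqref{bound-bv-time}--\eqref{bound-integral-time} to apply Helly's compactness theorem, and then pass to the limit in the distributional formulation of \eqref{eq:system_Lagrangian_M} and in the entropy inequality by the standard front-tracking plus operator-splitting machinery, adapted from \cite{AC2022} to the periodic setting.

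For compactness and item (a), the uniform $L^\infty$ bounds combined with $\tv\{\ln u^\nu(\cdot,t);\Tm\}\le 2q$ and \eqref{eq:unif-bound-tv-v} give uniform $y$-BV control of $(u^\nu,v^\nu)$. To obtain Lipschitz continuity in time in the $L^1$-norm I would estimate separately on each slab $(t^{n-1},t^n)$—where all wave speeds are bounded by $\lambda^\ast=\alpha/u_{inf}$, yielding
\[
\|(u^\nu,v^\nu)(\cdot,t)-(u^\nu,v^\nu)(\cdot,s)\|_{L^1(\Tm)}\le \lambda^\ast\bigl(\tv\{u^\nu\}+\tv\{v^\nu\}\bigr)(t-s)
\]
—and at each time step, where \eqref{eq:u-v_fractional-step} produces no jump in $u^\nu$ and an $L^1$-jump in $v^\nu$ bounded by $M\DT_\nu\|v^\nu\|_{L^1(\Tm)}$. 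Summing across $n\DT_\nu\le T$ yields a Lipschitz constant independent of $\nu$; Helly's selection theorem then extracts a subsequence converging in $L^1_{loc}(\Tm\times[0,+\infty))$ to a limit $(u,v)$. The $L^\infty$ bounds \eqref{eq:Linfty-bound-uv}, the initial condition, and the primitive convergence \eqref{eq:conv-int_u_dy} all pass to the limit directly. Item (b) follows in the same spirit from \eqref{bound-bv-time}--\eqref{bound-integral-time}.

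For item (c), testing \eqref{eq:system_Lagrangian_M} against $\phi\in \Cc{1}(\Tm\times[0,+\infty))$: on each slab $(t^{n-1},t^n)$ the homogeneous equation holds in distributions exactly along shocks, up to an $O(\eta_\nu)$ error from the choice of rarefaction-front speeds, while the time-step update \eqref{eq:u-v_fractional-step} contributes a Riemann sum converging to $\iint Mv\phi\,dy\,dt$ as $\DT_\nu\to 0$ thanks to the $L^1_{loc}$ convergence of $v^\nu$ and the time continuity from (a). The nonlinear flux $\alpha^2/u^\nu$ converges to $\alpha^2/u$ in $L^1_{loc}$ by the uniform positivity $u^\nu\ge u_{inf}>0$, yielding the weak formulation in the limit. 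For the entropy inequality, the Lax admissibility of shocks in the front-tracking solver gives, on each slab, $\partial_t\tilde\eta(u^\nu,v^\nu)+\partial_y\tilde q(u^\nu,v^\nu)\le\mu^\nu_n$ in distributions with $|\mu^\nu_n|$ of total mass $O(\eta_\nu)$; at each time step, convexity of $\tilde\eta$ in $v$ and a Taylor expansion give
\[
\tilde\eta\bigl(u^\nu,v^\nu(1-M\DT_\nu)\bigr)=\tilde\eta(u^\nu,v^\nu)-M\DT_\nu v^\nu \tilde\eta_v(u^\nu,v^\nu)+r^\nu_n,
\]
with remainder $r^\nu_n\ge 0$ of order $(\DT_\nu v^\nu)^2\tilde\eta_{vv}$. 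Summed over $n\le T/\DT_\nu$, the total remainder is $O(\DT_\nu)\to 0$, and the linear piece converges to $-\iint M\tilde\eta_v v\,\phi\,dy\,dt$; integrating by parts against a nonnegative test function delivers \eqref{eq:entropy_lagr}.

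The main obstacle is the entropy inequality under operator splitting: the correct sign of the limit source term $-M\tilde\eta_v v$ is not automatic, since a priori a forward-Euler update in $v$ can overshoot the entropy, but here convexity of $\tilde\eta$ forces $r^\nu_n\ge 0$ so the splitting error goes into the allowed direction of the one-sided inequality, and summing the $O(\DT_\nu^2)$ per-step errors over $O(1/\DT_\nu)$ steps leaves only a vanishing $O(\DT_\nu)$ total error. The uniform positivity $u^\nu\ge u_{inf}>0$ is what prevents loss of strict hyperbolicity in the limit and guarantees continuity of $\tilde\eta,\tilde q$ on the relevant range.
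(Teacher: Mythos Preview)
Your approach matches the paper's: apply Helly's theorem using the uniform BV and $L^\infty$ bounds of Lemmas~\ref{lem:bounds-on-bv}--\ref{Lemmauinfsup} together with the time-Lipschitz estimate \eqref{eq:int-y-lip-cont-t}, then pass to the limit; the paper in fact declares part~(c) ``standard'' and omits it, so your outline there is more detailed than the original. One point the paper handles explicitly that you elide: the uniform bounds $u_{inf}\le u^\nu\le u_{sup}$ in \eqref{eq:unif-bounds} hold only on $[0,1/\sqrt{\eta_\nu})$, not globally in $t$, so the paper first extracts a convergent subsequence on each $[0,T]$ and then runs a diagonal argument along $T_\nu=1/\sqrt{\eta_\nu}\to\infty$ to obtain a single subsequence converging on all of $[0,+\infty)$. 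A minor slip in your entropy discussion: the convexity remainder $r^\nu_n\ge 0$ actually lands on the \emph{wrong} side of the one-sided inequality $\partial_t\tilde\eta+\partial_y\tilde q+M\tilde\eta_v v\le 0$, not the favorable one; but as you correctly observe, the total contribution is $O(\DT_\nu)$ and vanishes, so the conclusion is unaffected.
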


\begin{proof} 
To prove the convergence, we show that the sequence $(u^{\nu},v^{\nu})$ satisfies the assumptions of Helly's compactness theorem 
(see Th. 2.3 in \cite{Bressan_Book}) on each set $[0,M)\times [0,T]$ for every $T>0$. 

Indeed, thanks to Lemmas~\ref{lem:bounds-on-bv}--\ref{Lemmauinfsup} the quantity $\tv\{(u^{\nu},v^{\nu})(\cdot,t)\}$, 
is uniformly bounded in $t\ge 0$ and $\nu\in\N$, and the same holds for the $L^\infty$ norm of $(u^{\nu},v^{\nu})$ in $(y,t)$. 
Moreover, one can prove that
\begin{equation}\label{eq:int-y-lip-cont-t}
\left\|u^\nu\left(\cdot,t_2 \right) - u^\nu\left(\cdot,t_1 \right)\right\|_{L^1(0,M)}
\le \widetilde L \,|t_2 - t_1| \qquad 
\end{equation}
where $\widetilde L$ is the product of a global bound on $\tv\{u^\nu(\cdot,t); [0,M)\}$ and a bound $\lambda^*$ on the propagation speeds of the wave fronts. 
Thanks to Lemmas~\ref{prop:equivalence-Lin}--\ref{Lemmauinfsup}, the constant $\widetilde L$ can be chosen independently of $t$ and $\nu$. 
Hence, by Helly's theorem, there exists a subsequence of $(u^{\nu},v^{\nu})$ that converges to a limit $(u,v)(y,t)$ in $L^1_{loc}\left([0,M)\times [0,T]\right)$, for every fixed $T$. 
By choosing a sequence of times $T_\nu = 1/\sqrt{\eta_\nu}$ and recalling \eqref{eq:unif-bounds}, we can use a diagonal argument to find a subsequence of $(u^{\nu},v^{\nu})$  that converges in $L^1_{loc}\left([0,M)\times [0,+\infty)\right).$
As a consequence, the convergence in \eqref{eq:conv-int_u_dy} holds for every $y\in [0,M)$. Moreover, 
$(u,v)$ satisfies the initial condition~\eqref{eq:init-data-uv}, as well as property~\eqref{eq:Linfty-bound-uv} because of~\eqref{eq:unif-bounds} and~ \eqref{eq:unif-bounds-v_nu}. The proof of (c) is standard and, therefore, omitted. 
%
\end{proof}

{\bf Proof of Theorem~\ref{newthm}.}\quad
Theorem~\ref{newthm} follows immediately since Theorem~\ref{Th-1-lagr} is a generalization of Theorem~\ref{newthm}.

\section{Proof of Theorem~\ref{Th-1}} \label{Sect:3}
This section is devoted to the proof of Theorem~\ref{Th-1}. As a first step we introduce the change of variables, from Lagrangian to Eulerian.
Define
\begin{equation}\label{eq:def-of-inverse-chi}
    x^\nu(y,t) :=  \int_0^y u^\nu\left(y',t \right) dy'
    \,,\qquad y\in [0,M),\ t\ge 0.
\end{equation}
and consider the map 
\begin{align}\label{map}
    (y,t) &~\mapsto~ \left(x^\nu(y,t),t\right) \\
  [0,M)\times [0,1/\sqrt{\eta_\nu})  &~\mapsto~ [0,\ell^{\nu}(t))\times  [0,1/\sqrt{\eta_\nu})   \nonumber
\end{align}
where $\ell^\nu(t):=\int_0^M u^\nu(y,t) \,dy$, see also~\eqref{cal V and U}. 
\begin{lemma}\label{S3:lemma1}
	The map \eqref{map} is invertible and
\begin{align}
\label{eq:bi-Lipsch_x}
   u^\nu_{inf} |y_2-y_1| \le |x^\nu(y_2,t)- x^\nu(y_1,t)|&\le u^\nu_{sup} |y_2-y_1|\,, \qquad y_1,\ y_2\in [0,M)\,,\\[1mm]
\label{eq:xnu-Lipsch_t}
    |x^\nu(y,t_2)- x^\nu(y,t_1)| &\le \tilde L \, |t_2-t_1|\qquad \qquad  
    t_1,\ t_2 
    \in [0,1/\sqrt{\eta_\nu}]
\end{align}
for some constant $\tilde L>0$ independent of $y\in [0,M)$, $\nu$, $t$.  Moreover, for all $T>0$, $\ell^\nu(t)\to \ell$ uniformly on $[0,T]$ as $\nu\to +\infty$.
\end{lemma}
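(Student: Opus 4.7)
The plan is to establish the three claims in sequence, with each one following from earlier ingredients already collected in the paper.

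\textbf{Bi-Lipschitz in $y$ and invertibility.} For fixed $t \in [0, 1/\sqrt{\eta_\nu})$, the function $x^\nu(\cdot, t)$ is by \eqref{eq:def-of-inverse-chi} an absolutely continuous primitive of $u^\nu(\cdot, t)$. For $y_1 \le y_2$ in $[0,M)$ we have
\begin{equation*}
x^\nu(y_2,t) - x^\nu(y_1,t) = \int_{y_1}^{y_2} u^\nu(y', t)\, dy',
\end{equation*}
so estimate \eqref{eq:bi-Lipsch_x} is an immediate consequence of the uniform two-sided bound $0 < u_{inf} \le u^\nu(y, t) \le u_{sup}$ provided by Lemma~\ref{Lemmauinfsup}, \eqref{eq:unif-bounds}. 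The lower bound in \eqref{eq:bi-Lipsch_x}, together with the strict positivity $u_{inf}>0$, shows that $y \mapsto x^\nu(y, t)$ is strictly increasing, hence the map $(y, t) \mapsto (x^\nu(y, t), t)$ is injective and therefore invertible onto its image.

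\textbf{Lipschitz in $t$.} Exploit the conservation law $\partial_t u^\nu - \partial_y v^\nu = 0$, which is satisfied in the distributional sense by the front-tracking approximation on each strip $(t^{n-1}, t^n)$ (Rankine--Hugoniot along each front) and across the time steps as well, since by \eqref{eq:u-v_fractional-step} the density variable $u^\nu$ is unchanged at $t = t^n$. Integrating in $y'$ from $0$ to $y$ and in time from $t_1$ to $t_2$, a telescopic computation yields
\begin{equation*}
x^\nu(y, t_2) - x^\nu(y, t_1) = \int_0^y \bigl( u^\nu(y', t_2) - u^\nu(y', t_1)\bigr)\, dy' = \int_{t_1}^{t_2} \bigl( v^\nu(y, t) - v^\nu(0+, t) \bigr)\, dt.
\end{equation*}
The uniform $L^\infty$ bound \eqref{eq:unif-bounds-v_nu} on $v^\nu$ then gives \eqref{eq:xnu-Lipsch_t} with $\tilde L = 2 \widetilde C_0$, independent of $y$, $\nu$, and $t$.

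\textbf{Uniform convergence of $\ell^\nu(t)$ to $\ell$.} Note that $\ell^\nu(t) = \mathcal{U}^\nu(t)$ in the notation of \eqref{cal V and U}, so by \eqref{estimate:mathcalU} of Lemma~\ref{L4},
\begin{equation*}
|\ell^\nu(t) - \ell| \le \frac{M}{\nu} + \tilde C\, \eta_\nu \int_0^t \inf\{u^\nu(\tau)\}\, d\tau.
\end{equation*}
For $T > 0$ fixed and $\nu$ large enough that $T < 1/\sqrt{\eta_\nu}$, the uniform bound $\inf\{u^\nu(\tau)\} \le u_{sup}$ from \eqref{eq:unif-bounds} gives, for every $t \in [0, T]$,
\begin{equation*}
|\ell^\nu(t) - \ell| \le \frac{M}{\nu} + \tilde C\, u_{sup}\, \eta_\nu\, T,
\end{equation*}
a bound that is independent of $t$ and tends to $0$ as $\nu \to \infty$ since $\eta_\nu \to 0$. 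This yields the uniform convergence on $[0,T]$.

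The only subtlety to be careful with is the Lipschitz in $t$ step: one must ensure that the distributional identity $\partial_t u^\nu = \partial_y v^\nu$ is not disrupted by the operator-splitting step, which is precisely why it is essential that $u^\nu$ is \emph{not} modified at $t = t^n$ in \eqref{eq:u-v_fractional-step}. Once this is observed, the remaining computations are routine.
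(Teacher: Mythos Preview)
Your arguments for the bi-Lipschitz bound in $y$ and for the uniform convergence $\ell^\nu(t)\to\ell$ are correct and essentially match the paper.

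The gap is in the Lipschitz-in-$t$ step. You assert that the front-tracking approximation satisfies $\partial_t u^\nu - \partial_y v^\nu = 0$ exactly in the distributional sense, justified by ``Rankine--Hugoniot along each front''. This is not true: the construction in Subsection~\ref{S2.3} explicitly perturbs wave speeds by up to $\eta_\nu$ to avoid multiple simultaneous interactions, and rarefaction fronts are assigned the characteristic speed of the right state rather than the Rankine--Hugoniot speed. Consequently the identity
\[
x^\nu(y,t_2)-x^\nu(y,t_1)=\int_{t_1}^{t_2}\bigl(v^\nu(y,t)-v^\nu(0+,t)\bigr)\,dt
\]
fails; the right-hand side acquires an error term of size $\O(1)\,\eta_\nu\,\tv\{u^\nu(\cdot,t)\}\,|t_2-t_1|$, exactly as in the computation of $(II)$ in the proof of Lemma~\ref{L4}. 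Your approach is salvageable: once this error is carried along, the uniform bound on $\tv\{u^\nu\}$ (Lemmas~\ref{prop:equivalence-Lin} and~\ref{Lemmauinfsup}) yields a Lipschitz constant $\tilde L = 2\widetilde C_0 + C$ with $C$ independent of $\nu$.

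The paper sidesteps the issue entirely by invoking the standard finite-propagation-speed estimate \eqref{eq:int-y-lip-cont-t}: for a piecewise constant function whose jumps travel at speeds bounded by $\lambda^*$, one has $\|u^\nu(\cdot,t_2)-u^\nu(\cdot,t_1)\|_{L^1}\le \lambda^*\,\tv\{u^\nu\}\,|t_2-t_1|$, and then $|x^\nu(y,t_2)-x^\nu(y,t_1)|$ is bounded by this $L^1$ norm directly from the definition \eqref{eq:def-of-inverse-chi}. This route is more elementary and makes no use of the structure of the equations.
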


\begin{proof}
From the bounds on $u^\nu$ in \eqref{eq:unif-bounds}, for every $t\in  [0,1/\sqrt{\eta_\nu}]$ the map
\begin{align*}
y&\mapsto x^\nu(y,t)\\
[0,M) &\mapsto [0,\ell^\nu(t))
\end{align*}
is Lipschitz continuous, strictly increasing, bijective and it satisfies \eqref{eq:bi-Lipsch_x}.

Concerning \eqref{eq:xnu-Lipsch_t}, definition \eqref{eq:def-of-inverse-chi} yields
\begin{align*}
     |x^\nu(y,t_2)- x^\nu(y,t_1)| &\le  \int_0^y \left| u^\nu\left(y',t_2 \right) - u^\nu\left(y',t_1 \right)\right| dy'\\
     &\le  \widetilde L |t_2 - t_1| \;, \quad 0\le y <M,
\end{align*}
where we used \eqref{eq:int-y-lip-cont-t}.
Finally, recalling that $\ell^\nu(t)$ coincides with $\mathcal{U}^\nu(t)$ in \eqref{cal V and U},  we deduce, 
from \eqref{estimate:mathcalU} and Lemma~\ref{Lemmauinfsup}, that $\ell^\nu(t)\to \ell$ as $\nu \to +\infty,\,$ uniformly on $[0,T]$, for all $T>0$.
\end{proof}

Next, define the map 
	\begin{align*}
	(x,t) &~\to~ y=\chi^\nu(x,t)\\
	[0,\ell^\nu(t) )\times [0,1/\sqrt{\eta_\nu}) &~\mapsto~ [0,M)
	\end{align*}
	that is the inverse of $y \mapsto x^\nu(y,t)$\,, and define $\rho^\nu,\,\vv^\nu,\,\mm^\nu$ by
\begin{align}\label{eq:rho_nu}
	 \begin{cases}
	\rho^\nu(x,t)=\{u^\nu(\chi^\nu(x,t),t)\}^{-1},\\
	\vv^\nu(x,t)=v^\nu(\chi^\nu(x,t),t),\\
	m^\nu(x,t)=\rho^\nu(x,t)\vv^\nu(x,t)\,.
	\end{cases} & 
\end{align}
The bounds \eqref{eq:unif-bounds} for $u^\nu$ yield 
\begin{equation*}
    0<(u_{sup})^{-1}\le \rho^\nu(x,t) \le (u_{inf})^{-1}\,,
\end{equation*}
for $(x,t)\in [0,\ell^\nu(t) )\times [0,1/\sqrt{\eta_\nu})$.  From \eqref{eq:identity-tvlnu} and \eqref{LLin-decreases}, it follows that
\begin{equation*}
    \frac 12 \tv \{\ln(\rho^\nu)(\cdot,t); [0,\ell^\nu(t)) \} = L^\nu(t) \le L^\nu(0+)\le q\,,
\end{equation*}
and hence that 
\begin{align}\label{4. rho tv final}
     \tv \{\rho^\nu(\cdot,t); [0,\ell^\nu(t))  \}  &\le u_{sup} \cdot\tv \{\ln(\rho^\nu)(\cdot,t); [0,\ell^\nu(t))\} \le u_{sup}\cdot 2 q\,,
     \end{align}
for $t\in [0,1/\sqrt{\eta_\nu})$. Notice that the last bound is independent of $\nu$ and $t$\,. 

About $\vv^\nu$, from its definition \eqref{eq:rho_nu} it follows immediately that the bounds on the $L^\infty$ norm 
and on the total variation for $v^\nu(\cdot,t)$ are valid also for $\vv^\nu$; in particular 
\eqref{eq:unif-bound-tv-v} gives
\begin{align}\label{4. v tv final}
\tv \{\vv^\nu(\cdot,t); [0,\ell^\nu(t)) \} = \tv \{v^\nu(\cdot,t);\mathbb{T}_M\} \le C_1 \,,\qquad \text{for} \,\,t\in [0,1/\sqrt{\eta_\nu}).
\end{align}

Next, let's address the admissibility conditions and weak formulation of the approximate solutions $(\rho^\nu, \vv^\nu).$
Let $0 \leq y_1^\nu(t)<\ldots<y_{N^\nu(t)}^\nu(t)<M$ be the location of discontinuities for $(u^\nu,v^\nu)(\cdot,t)$, as in 
\eqref{eq:disc-points} and define
\begin{equation*}
    x^\nu_j(t) : = \int_0^{y_j^\nu(t)} u^\nu\left(y,t \right) dy  ~\in [0,\ell^\nu(t))\qquad j=1,\ldots,N^\nu(t).
\end{equation*}
Next lemma states that the Rankine-Hugoniot conditions are approximately satisfied across the piecewise linear curves $x^\nu_j(t)$.
The proof is totally analogous to that of \cite[Lemma 4.3]{AC2022}.

\begin{lemma}\label{lem:RH-cond-rhov}
There exists a constant $C>0$ independent of $j=1,\ldots,N^\nu(t)$, $t$ and $\nu$ such that
\begin{align*}
    \frac{d}{dt} x^\nu_j(t) &= \frac{\Delta \mm^\nu}
    {\Delta \rho^\nu}\left(x^\nu_j(t)\right) + \O(1)\eta_\nu\,, 
    \\
    \frac{d}{dt} x^\nu_j(t) &= \frac{\Delta \left( \frac{(\mm^\nu)^2}{\rho^\nu}+p(\rho^\nu)\right)
    }{\Delta \mm^\nu}\left(x^\nu_j(t)\right) + \O(1)\eta_\nu\,, 
\end{align*}
and $|\O(1)|\le C\,$. Moreover, if $y_j^\nu$ is a shock (resp. a rarefaction) for ~\eqref{eq:system_Lagrangian_M}, then $x_j^\nu$ is a shock (resp. a rarefaction) for system~\eqref{eq:system2}.
\end{lemma}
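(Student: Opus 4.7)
The approach is to follow closely the argument of Lemma 4.3 in \cite{AC2022}, adapting the bookkeeping to the periodic torus. On each open interval $(t^{n-1},t^n)$, away from interaction points, the Lagrangian approximation $(u^\nu,v^\nu)$ is piecewise constant with discontinuities at $y_1^\nu(t)<\ldots<y_{N^\nu(t)}^\nu(t)$, and each front $k$ moves at a constant speed $\mu_k^\nu$. By construction (Subsection \ref{S2.3}), $\mu_k^\nu$ equals the exact Lagrangian Rankine-Hugoniot speed at shocks, whereas at a rarefaction front of strength $\varepsilon_k$ (with $|\varepsilon_k|<\eta_\nu$) the assigned speed is the characteristic speed of the right state and therefore differs from the exact Rankine-Hugoniot value by $\O(|\varepsilon_k|)=\O(\eta_\nu)$. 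Consequently, at every front the approximate jump identities
\begin{equation*}
\mu_k^\nu\,\Delta u^\nu = -\Delta v^\nu + \O(\eta_\nu|\varepsilon_k|),\qquad
\mu_k^\nu\,\Delta v^\nu = \Delta(\alpha^2/u^\nu) + \O(\eta_\nu|\varepsilon_k|)
\end{equation*}
hold, with $\Delta$ denoting the jump across the $k$-th front.

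Next, I would differentiate $x_j^\nu(t)=\int_0^{y_j^\nu(t)} u^\nu(y,t)\,dy$ directly, exploiting the piecewise constant structure of $u^\nu$. Writing $x_j^\nu(t)=\sum_{k=1}^{j} u_k^\nu\bigl(y_k^\nu(t)-y_{k-1}^\nu(t)\bigr)$ with $y_0^\nu\equiv 0$ and $u_k^\nu$ the constant value of $u^\nu$ in the $k$-th cell, a short rearrangement expresses $\dot x_j^\nu$ as $u_j^\nu\mu_j^\nu$ minus a linear combination of the jumps $\Delta u^\nu$ weighted by the speeds of the other fronts. Substituting $\mu_k^\nu\,\Delta u^\nu=-\Delta v^\nu+\O(\eta_\nu|\varepsilon_k|)$, the $\Delta v^\nu$ contributions telescope, while the residual is bounded by $\eta_\nu\sum_k|\varepsilon_k|=\eta_\nu L(t)$. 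Lemma \ref{lem:bounds-on-bv} provides $L(t)\le q$ uniformly in $\nu$ and $t$, so the accumulated error is $\O(\eta_\nu)$ with a constant depending only on $q$. A direct algebraic computation using $\rho^\nu=1/u^\nu$ and $m^\nu=v^\nu/u^\nu$ then converts the resulting expression into the Eulerian Rankine-Hugoniot ratio $\Delta m^\nu/\Delta\rho^\nu$ at $x_j^\nu(t)$, proving the first identity. The second identity follows verbatim from the jump condition $\mu_k^\nu\Delta v^\nu=\Delta(\alpha^2/u^\nu)+\O(\eta_\nu|\varepsilon_k|)$ together with $p(\rho^\nu)=\alpha^2/u^\nu$ and $(m^\nu)^2/\rho^\nu+p(\rho^\nu)=(v^\nu)^2/u^\nu+\alpha^2/u^\nu$; the source terms $-Mv^\nu$ and $-Mm^\nu$ are regular and play no role in the jump relations. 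At the time steps $t^n$, the update \eqref{eq:u-v_fractional-step} leaves $u^\nu$ unchanged and produces no front motion, so the whole argument applies unchanged on each open interval $(t^{n-1},t^n)$.

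The preservation of wave type follows directly from Lemma \ref{S3:lemma1}: the map $y\mapsto x^\nu(y,t)$ is a strictly increasing bi-Lipschitz diffeomorphism since $u^\nu$ is bounded above and away from zero. It therefore preserves the ordering of the left and right states across each discontinuity, and combined with the genuine nonlinearity of both $2\times 2$ systems, this implies that a Lax shock (resp.\ rarefaction) in Lagrangian variables transforms into a Lax shock (resp.\ rarefaction) in Eulerian variables. I expect the main technical point to be the careful accumulation of the $\O(\eta_\nu)$ speed errors across the many rarefaction fronts that can be simultaneously present, controlled precisely by the uniform bound $L(t)\le q$; a secondary subtlety is the handling of fronts that cross the identification $y=0\equiv y=M$, which by the periodic front-tracking algorithm are re-injected on the opposite side without altering their strength.
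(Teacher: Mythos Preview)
Your overall strategy coincides with the paper's, which simply defers to \cite[Lemma~4.3]{AC2022}. The outline (differentiate $x_j^\nu$, use the approximate Lagrangian Rankine--Hugoniot relations, telescope, and convert to Eulerian quantities) is the right one. However, there is a genuine gap in the step where you claim that ``the $\Delta v^\nu$ contributions telescope'' and that ``a direct algebraic computation \ldots\ converts the resulting expression into $\Delta m^\nu/\Delta\rho^\nu$''.

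With your decomposition $x_j^\nu(t)=\sum_{k=1}^{j} u_k^\nu\bigl(y_k^\nu(t)-y_{k-1}^\nu(t)\bigr)$, $y_0^\nu\equiv 0$, one obtains
\[
\dot x_j^\nu
= u^\nu(y_j^\nu{-},t)\,\mu_j^\nu \;-\; \sum_{k=1}^{j-1}\mu_k^\nu\,\Delta u^\nu_k
= u^\nu(y_j^\nu{-},t)\,\mu_j^\nu + \sum_{k=1}^{j-1}\Delta v^\nu_k + \O(\eta_\nu)\,,
\]
and the telescoping sum equals $v^\nu(y_j^\nu{-},t)-v^\nu(0{+},t)$, \emph{not} $v^\nu(y_j^\nu{-},t)$ alone. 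Since the algebraic identity relating Lagrangian and Eulerian shock speeds is $u_-\,\mu^{RH}+v_- = \Delta m/\Delta\rho$, you end up with
\[
\dot x_j^\nu \;=\; \frac{\Delta m^\nu}{\Delta\rho^\nu}\bigl(x_j^\nu(t)\bigr)\;-\;v^\nu(0{+},t)\;+\;\O(\eta_\nu)\,,
\]
and the drift $-v^\nu(0{+},t)$ is in general \emph{not} $\O(\eta_\nu)$; nothing in Lemmas~\ref{lem:bounds-on-bv}--\ref{Lemmauinfsup} forces the pointwise value $v^\nu(0{+},t)$ to be small. In \cite{AC2022} this term does not appear because the Lagrangian-to-Eulerian map there is $x^\nu(y,t)=a^\nu(t)+\int_0^{y}u^\nu\,dy'$ with $a^\nu$ the free boundary, so that $\dot a^\nu(t)$ supplies exactly the missing $v^\nu(0{+},t)$. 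The periodic definition \eqref{eq:def-of-inverse-chi} pins $x^\nu(0,t)\equiv 0$ and therefore loses this cancellation; the adaptation from \cite{AC2022} is not ``totally analogous'' on this point. A clean fix is to augment \eqref{eq:def-of-inverse-chi} by a drift $a^\nu(t):=\int_0^{t}v^\nu(0{+},s)\,ds$ and work on the moving window $[a^\nu(t),\,a^\nu(t)+\ell^\nu(t))$, which is harmless by $\ell$-periodicity; with this modification your argument goes through verbatim and yields the stated lemma.
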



To conclude the proof of Theorem~\ref{Th-1}, let $(u^{\nu}, v^{\nu})$ be a subsequence which converges, as $\nu\to\infty$, to 
$(u,v)$ in $L^1_{loc}\left([0,M)\times[0,+\infty) \right)$ as established in Theorem~\ref{Th-1-lagr} and 
let $(u,v)(y,t)$ be the entropy weak solution of \eqref{eq:system_Lagrangian_M}-\eqref{eq:init-data-uv}. 

Then let $(\rho^{\nu}, \vv^{\nu})$ be the corresponding subsequence as in \eqref{eq:rho_nu}, defined on $[0,\ell^\nu(t))\times [0,1/\sqrt{\eta_\nu})$. 
Having the uniformly bounds of $(\rho^{\nu}, \vv^{\nu})$ on total variation as shown in~\eqref{4. rho tv final}--\eqref{4. v tv final} and Lemma~\ref{S3:lemma1}, 
we deduce the convergence of the sequence as $\nu\to\infty$, to a function $(\rho,\vv)$ in $L^1_{loc}\left([0,\ell)\times [0,+\infty)\right)$. 
Then the function $(\rho,\vv)$ is extended by $\ell$-periodicity on $\mathbb{R}$. From the convergence
$$
\int_{[0,\ell^\nu(t))} \rho^\nu(x,t) \,dx \to \int_{[0,\ell)} \rho(x,t) \,dx\qquad \nu\to\infty\,,
$$
and $\int_0^{\ell^\nu(t)} 
 \rho^\nu(x,t) \,dx =\int_0^M dy=M$, we get that the total mass is conserved in time. Also, the approximate total momentum satisfies
$$
\int_0^{\ell^\nu(t)} \rho^\nu(x,t) \vv^\nu(x,t) \,dx= {\mathcal V}^\nu(t)\to 0,\qquad \text{as } \nu\to \infty\,,
$$
using estimate~\eqref{estimate:mathcalV}. Since $M_1$ is taken to be $0$ from the change of variables in Section~\ref{S2}, 
the limit $(\rho,\vv)$ conserves also momentum on $\mathbb{T}_\ell$. 

Finally, for any test function $\phi\in C^1_c\left( \mathbb{T}_\ell\times(0,\infty)\right)$ we define
	\begin{align*}
	& \RR^\nu := 
	\iint_{[0,\ell^\nu(t))\times\R_+} \left\{\rho^\nu\phi_t + \mm^\nu\phi_x \right\}\; dxdt\,,
	\\
	&
	\MM^\nu := 
	\iint_{[0,\ell^\nu(t))\times\R_+}\left\{ \mm^\nu\phi_t
	+ \left[p(\rho^\nu) + \frac{(\mm^\nu)^2}{\rho^\nu} \right]\phi_x
	-M\mm^\nu \phi \right\}\,dx dt\,.
	\end{align*}
Since $\ell^\nu(\cdot)\to\ell$ uniformly on bounded sets, and thanks to the properties stated in Lemma~\ref{lem:RH-cond-rhov}, it follows that 
	\begin{equation*}
	\lim_{\nu\to\infty} \RR^\nu= 0 = \lim_{\nu\to\infty} \MM^\nu\,.
	\end{equation*} 
Hence $(\rho,\mm)$ is a distributional solution to ~\eqref{eq:system2}. The proof that \eqref{entropy_eulerian} holds, in the sense of distributions in 
$\mathbb{T}_\ell\times [0,+\infty)$, is standard and therefore omitted.

Therefore $(\rho,\vv)$ is an entropy weak solution of the Cauchy problem to~\eqref{eq:system2} in the sense of Definition~\ref{entropy-sol} together with ${\bf (\Psi)}$ and $M_1=0$. Since the change of variables introduced in Section~\ref{S2} preserves the definition of entropy weak solution, the inverse transformation of variables leads to an  entropy weak solution for the original problem \eqref{eq:system}-\eqref{eq:initial_datum}. This completes the proof of the theorem.

{ \small
\subsection*{Acknowledgments}
D.A. and F.A.C. were partially supported by the Ministry of University and Research (MUR), Italy under the grant PRIN 2020 - Project N. 20204NT8W4, ``Nonlinear evolution PDEs, fluid dynamics and transport equations: theoretical foundations and applications" and by the INdAM-GNAMPA Project 2023, CUP E53C22001930001, ``Equazioni iperboliche e applicazioni".
F.A.C.  was partially supported by the Project ``Sistemi iperbolici: modelli di traffico veicolare e modelli idrodinamici di tipo flocking'', 
Progetti di Ateneo per Avvio alla Ricerca 2023, University of L'Aquila.\\ 
C.C. was partially supported by the internal Grant (913724) with title ``Hyperbolic Conservation Laws: Theory and Applications" supported from University of Cyprus.
}

\end{document}